\lstdefinelanguage{GAP}{%
  morekeywords={%
    Assert,Info,IsBound,QUIT,%
    TryNextMethod,Unbind,and,break,%
    continue,do,elif,%
    else,end,false,fi,for,%
    function,if,in,local,%
    mod,not,od,or,%
    quit,rec,repeat,return,%
    then,true,until,while%
  },%
  sensitive,%
  morecomment=[l]\#,%
  morestring=[b]",%
  morestring=[b]',%
}[keywords,comments,strings]
\newcommand{\sk}{\smallskip}
\newcommand{\mk}{\medskip}
\newcommand{\bk}{\bigskip}
\newcommand{\xleftrightarrow}[2][]{\ext@arrow 3359\leftrightarrowfill@{#1}{#2}}
\newcommand{\xdasharrow}[2][->]{
% correct vertical setting by egreg:
% http://tex.stackexchange.com/a/59660/13304
\tikz[baseline=-\the\dimexpr\fontdimen22\textfont2\relax]{
\node[anchor=south,font=\scriptsize, inner ysep=1.5pt,outer xsep=2.2pt](x){#2};
\draw[shorten <=3.4pt,shorten >=3.4pt,dashed,#1](x.south west)--(x.south east);
}
}
\newcommand\reallywidehat[1]{%
\savestack{\tmpbox}{\stretchto{%
  \scaleto{%
    \scalerel*[\widthof{\ensuremath{#1}}]{\kern-.6pt\bigwedge\kern-.6pt}%
    {\rule[-\textheight/2]{1ex}{\textheight}}%WIDTH-LIMITED BIG WEDGE
  }{\textheight}% 
}{0.5ex}}%
\stackon[1pt]{#1}{\tmpbox}%
}
\newtheorem{thm}{Theorem}[section]
\newtheorem{lem}[thm]{Lemma}
\newtheorem{prop}[thm]{Proposition}
\newtheorem{defn}[thm]{Definition}
\newtheorem{defn-prop}[thm]{Definition-Proposition}
 \newcommand{\eq}[1][r]
   {\ar@<-3pt>@{-}[#1]
    \ar@<-1pt>@{}[#1]|<{}="gauche"
    \ar@<+0pt>@{}[#1]|-{}="milieu"
    \ar@<+1pt>@{}[#1]|>{}="droite"
    \ar@/^2pt/@{-}"gauche";"milieu"
    \ar@/_2pt/@{-}"milieu";"droite"}
 \newcommand{\incl}[1][r]
  {\ar@<-0.2pc>@{^(-}[#1] \ar@<+0.2pc>@{-}[#1]}
\author[A.-M. Castravet and L. Pirio]{\href{mailto:ana-maria.castravet@uvsq.fr}{Ana-Maria Castravet}  \& \href{mailto:luc.pirio@uvsq.fr}{Luc Pirio}}
\title[Hyperlogarithmic functional equations on del Pezzo surfaces]
{Hyperlogarithmic functional equations on del Pezzo surfaces}
\begin{document}

\maketitle

%%%%%%%%%%%%%%%%%%%%%%%%%%%%%%%%%%%%%%%
%%%%%%%%%%%%%%%%%%%%%%%%%%%%%%%%%%%%%%%
%\begin{center} 
%{\Large \bf  Hyperlogarithmic functional identities on del Pezzo surfaces}
%%%%%%%%%%%%%%%%%%%%%%%%%%%%%%%%%%%%%%%
%%%%%%%%%%%%%%%%%%%%%%%%%%%%%%%%%%%%%%%
%\bigskip
%{\large Ana-Maria Castravet,  Luc Pirio${}^\dagger$}
%{\large Ana-Maria Castravet,  Luc Pirio\footnote{Coresponding author}}
%\bigskip
%{}
%\bigskip
%\today
%\end{center}

\newcommand{\homeo}{\textup{Homeo}^+(S,M)}
\newcommand{\homeoo}{\textup{Homeo}_0(S,M)}
\newcommand{\mg}{\mathcal{MG}(S,M)}
\newcommand{\mmg}{\mathcal{MG}_{\bowtie}(S,M)}
\newcommand{\Z}{\mathbf{Z}}
\newcommand{\Q}{\mathbf{Q}}
\newcommand{\N}{\mathbf{N}}

%%%%%%%%%%%%%%%%%%%%%%
%%%%%%%%%%%%%%%%%%%%%%

% ANA-MARIA'S MACROS

\newcommand{\CC}{\mathbf{C}}
\newcommand{\PP}{\mathbf{P}}
\newcommand{\ZZ}{\mathbf{Z}}

\def\Bl{\text{\rm Bl}}
\def\Pic{\text{\rm Pic}}

\newcommand{\Hh}{\text{H}}
\newcommand{\HH}{\boldsymbol{\mathcal H}}

\newcommand{\mc}{\mathfrak{c}}

\newcommand{\cK}{\mathcal{K}}
\newcommand{\cL}{\mathcal{L}}
\newcommand{\cO}{\mathcal{O}}
\newcommand{\cU}{\mathcal{U}}
\newcommand{\cV}{\mathcal{V}}

\def\hra{\hookrightarrow}
\def\ra{\rightarrow}

%%%%%%%%%%%%%%%%%%%%%%
%%%%%%%%%%%%%%%%%%%%%%

\hyphenation{ap-pro-xi-ma-tion}

\vspace{-1cm}
\begin{abstract}  For any $d\in \{1,\ldots,6\}$, we prove that the web of conics on a del Pezzo surface of degree $d$
carries a functional identity whose components are antisymmetric hyperlogarithms of weight $7-d$. 
Our approach is uniform with respect to $d$ and  relies on classical results about the action of the Weyl group on the set of lines on the del Pezzo surface. 
These hyperlogarithmic functional identities are natural generalizations of the classical  3-term and (Abel's) 5-term identities satisfied by the logarithm and the dilogarithm, which   correspond to the cases when $d=6$ and $d=5$ respectively.
\end{abstract}

\vspace{-0.3cm}
\section{Introduction}

%%%%%%%%%%%%%%%%%%%%%%%%%%%%%%%%%%%%%%%
\subsection{Functional equations of polylogarithms}
%%%%%%%%%%%%%%%%%%%%%%%%%%%%%%%%%%%%%%%
The classical logarithm ${\rm Log}$ satisfies  \emph{Cauchy's identity}
%%%%%%%%%%%%%%%%%%%%%%%%%%%%%%%%%%%%%%%%%%
\begin{equation}
\label{Eq:EFA-3-terms-Log}
%%%%%%%%%%%%%%%%%%%%%%%%%%%%%%%%%%%%%%%%%%
{\rm Log}(x)+{\rm Log}(y)-{\rm Log}(xy) = 0
\end{equation}
%`Cauchy identity' 
%%%%%%%%%%%%%%%%%%%%%%%%%%%%%%%%%%%%%%%%%%
for all $x,y>0$, and this functional identity is fundamental in mathematics. 

Several authors of the XIXth century have independently discovered equivalent forms of the following identity
%{\red (Equivalent forms of the following identity have been discovered in the  XIXth century:)}
$$
\boldsymbol{\big(\mathcal Ab\big)}
\hspace{3cm}
R\big(x\big)-R\big(y\big)-R\bigg(\frac{x}{y}\bigg)-R\left(\frac{1-y}{1-x}\right)
+R\left(\frac{x(1-y)}{y(1-x)}\right)=0\, , 
%\,\quad x,y\in \mathbf R,\quad 0<x<y<1
\hspace{3cm} {}^{}
$$
satisfied for all $x,y$ such that $0<x<y<1$, 
where $R$ stands for  \href{https://mathworld.wolfram.com/RogersL-Function.html}{\it Rogers' dilogarithm}, defined by
\begin{equation}
\label{Eq:R}
%R(x)= {\bf L}{\rm i}_2(x) + \frac{1}{2}\log(x)\log(1 - x) - %\frac
%{\pi^2}/{6}\, %\footnotemark
R(x)= {\bf L}{\rm i}_2(x) + \frac{1}{2}\,{\rm Log}(x)\,{\rm Log}(1 - x) - \frac
{{}^{}\hspace{0.1cm} \pi^2}{6}%\footnotemark
  \end{equation}
 for $x\in (0,1)$, 
where ${\bf L}{\rm i}_2$ stands for the classical bilogarithm, the weight 2 polylogarithm. 

The identity $\boldsymbol{\big(\mathcal Ab\big)}$ is nowadays called \emph{Abel's identity} of the dilogarithm, hence the notation.  
 It can be seen as a weight 2 generalization of 
 Cauchy's identity \eqref{Eq:EFA-3-terms-Log}.  To see in which way, recall the \emph{weight $n\geq 1$ polylogarithm ${\bf L}{\rm i}_n$}, 
classically   
defined on the unit disk ${\bf D}=\{\, z\in \mathbf C\,, \, \lvert z\lvert<1\, \}$ 
as the sum of the convergent series
% ${\bf L}{\rm i}_n(z)=\sum_{k\geq 1} z^k/k^n$.
   $${\bf L}{\rm i}_n(z)=\sum_{k\geq 1} z^k/k^n\, .$$    The first polylogarithm is related to the usual logarithm through the relation ${\bf L}{\rm i}_1(z)=-{\rm Log}(1-z)$ for $z\in \mathbf D$, and both ${\rm Log}$ and $R$ can be considered as 
   suitable versions of the first two polylogarithms such that the two functional identities \eqref{Eq:EFA-3-terms-Log}
 and $\boldsymbol{\big(\mathcal Ab\big)}$ hold true. 
 
Polylogarithms are special functions of great interest which satisfy properties  
 generalizing those of the logarithm and the dilogarithm. In particular, they satisfy functional equations of the form 
\begin{equation}
\label{Eq:Lin-equation}
\sum_{i=1}^{M} c_i \, {\bf L}{\rm i}_n(U_i)={\rm L}_{n-1}
\end{equation}
where the $c_i$'s are rational coefficients, the $U_i's$ are multi-variable rational functions and with ${\rm L}_{n-1}$ a rational expression in polylogarithmic functions of weight at most $n-1$.

Since the early XIX-th century (works of \href{https://arts.st-andrews.ac.uk/digitalhumanities/fedora/repository/islandora%3A2864#page/7/mode/2up}{Spence},  \href{http://www.digizeitschriften.de/dms/img/?PID=GDZPPN002142422}{Kummer},  \href{https://ia800204.us.archive.org/4/items/oeuvrescomplte02abel/oeuvrescomplte02abel.pdf}{Abel}, etc) 
 to nowadays, many authors have discovered functional identities of the above form satisfied by (some version of) polylogarithms ${\bf L}{\rm i}_n$ of weight $n\leq 7$.  
Multi-variable generalizations of polylogarithms have been considered as well, in particular their functional equations.  The subject is currently very active.\footnote{
{E.g.}, see the works \href{https://doi.org/10.1006/aima.1995.1045}{Goncharov}, \href{https://doi.org/10.1007/s00029-003-0312-z}{Gangl}, \href{https://arxiv.org/abs/1803.08585}{Goncharov-Rudenko}, \href{https://arxiv.org/abs/2012.09840}{Charlton-Gangl-Radchenko}, \href{https://arxiv.org/abs/2012.05599}{Rudenko}.} 
 Polylogarithms are connected to several distinct fields in mathematics such as hyperbolic geometry (volumes of hyperbolic polytopes), $K$-theory of number fields (Zagier's conjecture), theory of periods and multizeta values, scattering amplitudes in higher energy physics, theory of cluster algebras, etc.\footnote{ For more details, we refer to the surveys \cite{Zagier1989} or \cite{GoncharovICM}.} 
 %These papers are a bit outdated (they  are more than 30 years old in their original versions)  but are still very interesting to read.
 In particular, it is now clearly established that 
knowing functional identities of the form \eqref{Eq:Lin-equation} is important, 
cf.\,\cite[\S2]{GoncharovGelfandSeminar}. However, in spite of the important number of recent works on the subject, the functional identities satisfied by polylogarithms are still not well understood.

Identities of the form \eqref{Eq:Lin-equation} %satisfied by classical polylogarithms   
 are known to exist only for $n\leq 7$ and for the higher weights ($n=6, 7$) were obtained by computer aided 
 calculations (see \cite{Gangl}). The general belief is 
that, for any  $n\geq 1$, there should exist a fundamental identity of the form \eqref{Eq:Lin-equation} satisfied by ${\bf L}{\rm i}_n$ from which any other could be formally obtained (for instance, see the last paragraph of \cite[\S4.1]{Griffiths}).  
The first interesting case to be considered is for weight $2$,  
for which Abel's identity 
%$\boldsymbol{\big(\mathcal Ab\big)}$ 
 is the evoked fundamental one, a result which has been proved only recently in \cite{dJ}. For weight $3$,  the 22-term trilogarithmic equation in three variables found by Goncharov in \cite{Goncharov1995} may be the fundamental one but,  
as far we are aware of, there is no proof until now. The weight $4$ case is the subject of the recent  work \cite{GR} by Goncharov and Rudenko. Using 
the cluster structure of the moduli spaces $\mathcal M_{0,n+3}$, they construct a functional identity for the tetralogarithm which allows them to prove Zagier's conjecture in weight $4$. This identity is expected to play the same role for the tetralogarithm  as the one played by Abel's identity for the dilogarithm.

%%%%%%%%%%%%%%%%%%%%%%%%%%%%%%%%%%%%%%%%%
%%%%%%%%%%%%%%%%%%%%%%%%%%%%%%%%%%%%%%%%%
\subsection{Hyperlogarithms}
%%%%%%%%%%%%%%%%%%%%%%%%%%%%%%%%%%%%%%%%%
%%%%%%%%%%%%%%%%%%%%%%%%%%%%%%%%%%%%%%%%%
%%%%%%%%%%%%%%%%%%%%%%%%%%%%%%%%%%%%%%%%%
Hyperlogarithms are generalization of polylogarithms and they go back to Poincar\'e.  
They are multivalued holomorphic functions on $\mathbf P^1$ which can be obtained by iterated integrations of some given rational 1-forms with logarithmic singularities on the Riemann sphere. 
%$\mathbf P^1$.  
 More precisely, let 
$\sigma_1,\ldots,\sigma_{m+1}$ be $m+1$ pairwise
distinct points of $\PP^1$. We fix an affine coordinate  $z$ such that $\sigma_{m+1}=\infty$. Then the 1-forms $\omega_s=dz/(z-\sigma_s)$ for $s=1,\ldots,m$ form a basis of the space 
%
%${\bf H}^0\big(
%\mathbf P^1, \Omega^1_{\mathbf P^1} ( {\rm Log}\,\Sigma) 
%\big)$
 of global logarithmic 1-forms on $\PP^1$ with poles in $\Sigma=\{ \sigma_s\}_{s=1}^{m+1}$.  We set $Z=\mathbf P^1\setminus \Sigma$. 
 
Given any tuple $( s_k)_{k=1}^w$ of elements in $\{1,\ldots,m\}$, the \emph{weight $w$ hyperlogarithm $L_{\omega_{s_1}\cdots \omega_{s_w}}$} is the multivalued 
function on $Z$ defined inductively as follows: 
%by successive integrations according to the following relations: 
$$
L_{\omega_{s_w}}(z)=\int^z \omega_{s_w}= {\rm Log}\big(z-\sigma_{s_w}\big)
\qquad 
\mbox{ and } 
\qquad 
L_{ \omega_{s_1}\ldots \omega_{s_w} } (z)= 
%\int^{z}
%\omega_{s_1}\otimes \cdots \otimes \omega_{s_w}  = 
\int^{z} \frac{ L_{ \omega_{s_2}\ldots \omega_{s_w} } (u)}{u-\sigma_{s_1}} du
$$

The polylogarithmic functions (such as ${\rm Log}$, Rogers' dilogarithm $R$, or all the classical polylogarithm ${\bf L}{\rm i}_n$) are particular instances of hyperlogarithms in the specific case when  $\Sigma=\{0,1,\infty\}$.  If the properties of polylogarithms, in particular the functional equations they satisfy, have been studied intensely, this is 
much less the case for more general hyperlogarithms (however see \cite{W1} or the more recent \cite{Brown}).  
Several recent works have shown that 
hyperlogarithms are relevant for computing  
certain scattering amplitudes in higher energy physics (see for instance the PhD thesis \cite{Panzer}  or the recent ``white paper" \cite{white}, especially the fifth section therein).
\sk

In this paper, we describe generalizations in weight $3$, $4$, $5$ and $6$  of 
the $3$-term and $5$-term identities of the logarithm and dilogarithm respectively.
%$(\mathcal Ab\big)$
These identities are similar to the two latter classical identities, but involve non polylogarithmic hyperlogarithms.  
For this purpose, we  introduce a geometric viewpoint on Abel's identity 
$\boldsymbol{\big(\mathcal Ab\big)}$ by relating it to the conic fibrations of a quintic del Pezzo surface. 
The generalization will involve the conic fibrations of del Pezzo surfaces of 
degree $\leq 6$.\footnote{ The  3-term identity of the logarithm can also be considered from a geometric perspective, but it is less meaningful from this point of view, because it is ``too simple". This is why we only consider the case of Abel' equation.}  
\black

%%%%%%%%%%%%%%%%%%%%%%%%%%%%%%%%%%%%%%%
\subsection{Abel's identity on the quintic del Pezzo surface}
%%%%%%%%%%%%%%%%%%%%%%%%%%%%%%%%%%%%%%%
The five rational arguments  
$$U_1=x\, , \qquad U_2=y\, , \qquad   U_3=\frac{x}{y}
\, , \qquad 
U_4=\frac{1-y}{1-x} \qquad \mbox { and } \qquad 
U_5=\frac{x(1-y)}{y(1-x)}\,   $$
of $R$ in $\boldsymbol{\big(\mathcal Ab\big)}$ can be interpreted geometrically as follows: let 
$$
\beta: X_{4}={\bf Bl}_{p_1,\ldots,p_4}(\mathbf P^2) 
\longrightarrow \mathbf P^2 
$$
%$\beta: X_4\rightarrow \PP^2$ 
 be the blow-up of the complex projective plane at the $4$ points in general position $p_1=[1:0:0]$, $p_2=[0:1:0]$, $p_3=[0:0:1]$ and $p_4=[1:1:1]$. The surface is  the \emph{quintic del Pezzo surface}.  
 It carries five fibrations in conics $\phi_i: X_4\rightarrow \PP^1$ ($i=1,\ldots,5$) which coincide
with the compositions $U_i\circ \beta: X_4\dashrightarrow \PP^1$ 
as rational functions.  It follows that Abel's identity  can be written
$$
\boldsymbol{\big(\mathcal Ab_{X_4}\big)}
\hspace{5cm}
\sum_{i=1}^5 \epsilon_i \,R\big(\phi_i\big)=0 
\hspace{6cm} {}^{}
$$
for some constants $\epsilon_1,\ldots,\epsilon_5$ equal to $1$ or $-1$, this identity holding true locally at any sufficiently general point of $X_4$ for suitable branches of Rogers dilogarithm.

%%%%%%%%%%%%%%%%%%%%%%%%%%%%%%%%%%%%%%%
%%%%%%%%%%%%%%%%%%%%%%%%%%%%%%%%%%%%%%%
\subsection{Main result: generalization to del Pezzo surfaces of degree $\leq 6$} 
%%%%%%%%%%%%%%%%%%%%%%%%%%%%%%%%%%%%%%%
%%%%%%%%%%%%%%%%%%%%%%%%%%%%%%%%%%%%%%%
Let $3\leq r\leq8$ and let 
$$X_{r}={\bf Bl}_{p_1,\ldots,p_r}(\mathbf P^2)$$ be the blow-up of the projective plane at $r$ points in general position.  
Then $X_r$ is a del Pezzo surface of degree $9-r$, i.e., the anti-canonical 
class 
$-K_{X_r}$ is ample and with self-intersection 
$(-K_{X_r})^2=9-r$. If $3\leq r\leq 6$, then the complete linear system $-K_{X_r}$ defines an embedding $X_r\hra\PP^{9-r}$ such that the degree of $X_r$ is 
$9-r$. We define the \emph{degree} of a curve $C\subset X_r$ to be $C\cdot (-K_{X_r})$. 
Smooth rational curves in $X_r$ of degree $1$, respectively $2$, are called \emph{lines}, respectively \emph{smooth conics}. 
A conic fibration on $X_r$ is the equivalence class (up to post composition  with an element of $\rm{PGL_2}$) of a morphism $X_r\rightarrow \mathbf P^1$ 
such that a general fiber is a smooth conic. 

The following facts are well known: 
\vspace{-0.18cm}
\begin{itemize}
\item[${(i).}$]  The number $l_r$ of lines in $X_r$ is finite; \sk

\item[${(ii).}$]  The number $\kappa_r$ of conic fibrations on $X_r$ is finite as well;\sk 
\item[${(iii).}$]   Any conic fibration $X_r\rightarrow \mathbf P^1$ 
has exactly $r-1$ reducible fibers, each a union of two lines in $X_r$ intersecting transversely at one point.
\sk
\item[${(iv).}$] The Picard group $\Pic(X_r)$ is free  and is acted upon by a certain Weyl group $W_r$. Moreover, this action preserves the intersection product.  
\end{itemize}
\vspace{-0.1cm}
The values of $l_r$ and $\kappa_r$ for $3\leq r\leq 8$ are given in the following table: 
%%%%%%%%%%%%%%%%%%%%%%%%%%%%%%%%%%%%%%%%%%%
%%%%%%%%%%%%%%%%%%%%%%%%%%%%%%%%%%%%%%%%%%%
$$ \begin{tabular}{|c||c|c|c|c|c|c|}
%%%%%%%%%%%%%%%%%%%%%%%%%%%%%%%%%%%%%%%%%%%
\hline
$\boldsymbol{r}$ & 3 & 4& 5  & 6 & 7 & 8\\ \hline \hline
$\boldsymbol{l_r}$ & 6 & 10 &  16  & 27 & 56  & 240  \\ \hline
$\boldsymbol{\kappa_r}$ & 3 & 5 &    10 & 27 & 126 & 2160 \\
\hline
\end{tabular}
$$
%\caption{tt}
%\end{table}
%%%%%%%%%%%%%%%%%%%%%%%%%%%%%%%%%%%%%%%%%%%
%%%%%%%%%%%%%%%%%%%%%%%%%%%%%%%%%%%%%%%%%%%

Let $\phi_1,\ldots,\phi_{\kappa_r}: X_r\rightarrow \mathbf P^1$ be $\kappa_r$ pairwise non equivalent conic fibrations.  We denote by $L_r$ the divisor of $X_r$ whose the irreducible components are all the lines in $X_r$ and we set 
$$Y_r=X_r\setminus L_r\, .$$   
From ${(iii)}$, 
 we know that 
the complement $\Sigma_i$ of $\phi_i(Y_r)$ in $\mathbf P^1$ is a finite set with $r-1$ elements denoted by $\sigma_i^{1},\ldots,\sigma_i^{r-1}$.  One assumes that $\phi_i$ 
has been chosen such that one of the $\sigma_i^{t}$'s, say $\sigma_i^{r-1}$, coincides with $\infty\in \mathbf P^1$.  Then the  rational differentials $\omega_i^{t}= dz/(z-\sigma_i^{t})$ for $t=1,\ldots,r-2$ form a basis of the 
space of logarithmic 1-forms on $\mathbf P^1$ with poles along $\Sigma_i$.

 For all $i=1,\ldots,\kappa_r$,  let $AI_i^{r-2}$ be 
the \emph{complete antisymmetric hyperlogarithm} of weight $r-2$ on $Z_i=\mathbf P^1\setminus \Sigma_i$, defined as the antisymmetrization 
of the hyperlogarithm
$L_{\omega_i^1\cdots \omega_i^{r-2}}$ 
with respect to the logarithmic 1-forms $\omega_i^1,\ldots, \omega_i^{r-2}$,  i.e., 
$$
AI_i^{r-2}={\rm Asym}\Big(
L_{\omega_i^1\cdots \omega_i^{r-2}}
  \Big) = \frac{1}{(r-2)!}
\sum_{\nu \in \mathfrak S_{r-2}} (-1)^\nu \, 
L_{\omega_i^{\nu(1)}\cdots \omega_i^{\nu(r-2)}}\, 
$$
where, for any $\nu \in \mathfrak S_{r-2}$, we denote by $(-1)^\nu$ the signature of $\nu$.  Each  $AI_i^{r-2}$ is uniquely defined up to sign. 
Our main result is the following: 

%%%%%%%%%%%%%%%%%%%%%%%%%%%%%%%%%%%%%%%
\begin{thm}
\label{Thm:MMain}
%%%%%%%%%%%%%%%%%%%%%%%%%%%%%%%%%%%%%%%
There exists 
%a $\kappa_r$-tuple 
 $(\epsilon_i)_{i=1}^{\kappa_r}\in \{ \pm 1 \}^{\kappa_r}$, 
 unique up to a global sign, 
  such that 
for any $y\in Y_r$ and for a suitable choice of the branch of the hyperlogarithm $AI_i^{r-2}$ at $y_i=\phi_i(y)\in \mathbf P^1$ for each $i=1,\ldots,\kappa_r$, the following functional identity 
holds true  on an open neighbourhood of $y$ in $ Y_r$: 
$$ {}^{} \hspace{-6.3cm}
\boldsymbol{{\bf H Log}(X_r)}
\hspace{3.9cm}
\sum_{i=1}^{\kappa_r} \epsilon_i \,AI_i^{r-2}\big(\phi_i\big)=0\, .
$$
%holds true identically on a neighborhood of $y$ in $ Y_r$. 
%%%%%%%%%%%%%%%%%%%%%%%%%%%%%%%%%%%%%%%
\end{thm}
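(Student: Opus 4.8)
The plan is to reduce $\boldsymbol{{\bf H Log}(X_r)}$ to the vanishing of a rational $1$-form on $X_r$ and to prove that vanishing by induction on $r$, from $r=3$ up to $r=8$ (equivalently, descending induction on the degree $d=9-r$); the inductive step being the same for all $r$ is the sense in which the argument is uniform in $d$. The base case $r=3$ is Cauchy's identity \eqref{Eq:EFA-3-terms-Log}: after moving each $\Sigma_i=\{\sigma_i^1,\infty\}$ to $\{0,\infty\}$ by postcomposition with an element of $\mathrm{PGL}_2$, the three conic fibrations of the degree-$6$ surface $X_3$ satisfy $\phi_1\phi_2\phi_3=1$, so $\sum_{i=1}^{3}\mathrm{Log}(\phi_i)=0$ on the positive real locus, which is $\boldsymbol{{\bf H Log}(X_3)}$ with $\epsilon_1=\epsilon_2=\epsilon_3=+1$. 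For $r\geq4$, assume the theorem for $X_{r-1}$ with signs $(\epsilon'_j)_{j=1}^{\kappa_{r-1}}$ (unique up to a global sign) and set $F_r=\sum_{i=1}^{\kappa_r}\epsilon_i\,AI_i^{r-2}(\phi_i)$ for signs $(\epsilon_i)$ to be determined. Differentiating the defining recursion for hyperlogarithms yields, with $w=r-2$,
$$ d\,AI_i^{\,w}\;=\;\sum_{t=1}^{w}(-1)^{t-1}\,AI_{i,\hat t}^{\,w-1}\,\omega_i^{t}, $$
where $AI_{i,\hat t}^{\,w-1}$ is the complete antisymmetric hyperlogarithm attached to $\PP^1\setminus(\Sigma_i\setminus\{\sigma_i^{t}\})$; since $\phi_i^*\omega_i^{t}=d\log(\phi_i-\sigma_i^{t})$ has polar locus the reducible fibre $\phi_i^{-1}(\sigma_i^{t})$, the form $dF_r$ is rational on $X_r$ with at worst simple poles along the lines of $L_r$ and no other poles. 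As $X_r$ is rational, $\mathrm{H}^0(X_r,\Omega^1_{X_r})=0$, so it is enough to show that the Poincar\'e residue of $dF_r$ along every line $\ell\subset L_r$ vanishes.

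Fix such an $\ell$, let $V(\ell)=\{\,i:\ell\text{ is a component of a reducible fibre of }\phi_i\,\}$, and for $i\in V(\ell)$ write $\phi_i^{-1}(\phi_i(\ell))=\ell+\ell_i^{\dagger}$ and let $t_i$ satisfy $\sigma_i^{t_i}=\phi_i(\ell)$. A local computation shows that $\phi_i^*\omega_i^{t}$ has residue $\pm1$ along $\ell$ exactly when $t=t_i$ — with sign $+1$ if $\phi_i(\ell)\neq\infty$ and $-1$ if $\phi_i(\ell)=\infty$ — and residue $0$ otherwise, whence
$$ \mathrm{Res}_\ell(dF_r)\;=\;\sum_{i\in V(\ell)}\epsilon_i\,\eta_i(\ell)\,\big(AI_{i,\hat t_i}^{\,r-3}\circ\phi_i\big)\big|_\ell,\qquad \eta_i(\ell)\in\{\pm1\}. $$
Blow down the $(-1)$-curve $\ell$, say $\beta\colon X_r\to X_{r-1}$, and set $q=\beta(\ell)$. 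The numerology $2(r-1)\kappa_r=l_r\,\kappa_{r-1}$, valid by the table, combined with transitivity of $W_r$ on lines, shows that pull-back of ruling classes is a bijection from the rulings of $X_{r-1}$ onto $V(\ell)$; under it $\ell+\ell_i^{\dagger}$ is the unique reducible fibre of $\phi_i$ that becomes smooth under $\beta$, so $\Sigma_i\setminus\{\sigma_i^{t_i}\}$ is exactly the discriminant of the induced ruling $\bar\phi_{j(i)}$ of $X_{r-1}$ and $AI_{i,\hat t_i}^{\,r-3}$ is its complete antisymmetric hyperlogarithm of weight $(r-1)-2$. Moreover $\phi_i|_\ell$ is the constant $\phi_i(\ell)=\bar\phi_{j(i)}(q)$, so $(AI_{i,\hat t_i}^{\,r-3}\circ\phi_i)|_\ell=AI_{i,\hat t_i}^{\,r-3}(\bar\phi_{j(i)}(q))$; and $q\in Y_{r-1}$, because $X_r=\mathrm{Bl}_q X_{r-1}$ is del Pezzo, which forces $q$ onto no line of $X_{r-1}$. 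Therefore, once the $\epsilon_i$ are chosen so that $\epsilon_i\,\eta_i(\ell)$ matches $\epsilon'_{j(i)}$ up to one global sign per blow-down, for all $i\in V(\ell)$, we get
$$ \mathrm{Res}_\ell(dF_r)\;=\;\pm\sum_{j=1}^{\kappa_{r-1}}\epsilon'_j\,AI^{\,r-3}_j\big(\bar\phi_j(q)\big)\;=\;\pm\big[\,\text{left-hand side of }\boldsymbol{{\bf H Log}(X_{r-1})}\,\big](q)\;=\;0 $$
by the inductive hypothesis. Running this over all lines of $X_r$ determines a coherent system $(\epsilon_i)$ and gives $dF_r\equiv0$.

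Thus $F_r$ is locally constant on $Y_r$; the leftover additive constant is eliminated, as is classical for such functional equations, by letting $y$ tend to a maximally degenerate configuration and applying the inductive hypothesis together with the behaviour of $AI_i^{r-2}$ near its poles, which proves $\boldsymbol{{\bf H Log}(X_r)}$. Uniqueness of $(\epsilon_i)$ up to a global sign follows from the same rigidity: a second admissible system would differ by a relation supported on a subset $S$ of the rulings, the residue computation forces $V(\ell)\subseteq S$ or $V(\ell)\cap S=\varnothing$ for every line $\ell$, and transitivity of $W_r$ makes the ``co-vertical'' graph on the rulings connected, so $S$ is empty or everything.

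The crux is the combinatorial bookkeeping inside the inductive step, governed throughout by the $W_r$-action on $\Pic(X_r)$ and the line-incidence graph. The delicate points are: (i) verifying that the bijection between the rulings of $X_{r-1}$ and $V(\ell)$ respects \emph{all} the hyperlogarithmic data — punctures and logarithmic $1$-forms — i.e.\ that the $r-1$ reducible fibres of a ruling of $X_r$ and their line-components behave under $\beta$ as claimed; (ii) the $\mathbf{Z}/2\mathbf{Z}$-coherence problem for $(\epsilon_i)$, where the local signs $\eta_i(\ell)$ — in particular the twist coming from a reducible fibre lying over $\infty$ — and the per-blow-down global signs must be fitted together consistently; and (iii) the case $\phi_i(\ell)=\infty$, in which $AI_{i,\hat t_i}^{\,r-3}$ must be evaluated at one of its own poles, requiring a preliminary lemma on the regularised (tangential-base-point) limits of complete antisymmetric hyperlogarithms at punctures — this is the technical heart, and is also what makes each $AI_i^{r-2}$ well defined up to sign only. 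I expect (ii) and (iii) to be the main obstacles.
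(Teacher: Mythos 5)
Your strategy — differentiate once, view the result as a logarithmic $1$-form, kill it by checking residues along the lines and invoking $\mathrm{H}^0(X_r,\Omega^1_{X_r})=0$, with the residues handled by induction on $r$ via blow-downs — is genuinely different from the paper's, but its central reduction fails. For $r\geq 4$ the form $dF_r=\sum_i\epsilon_i\sum_t(-1)^{t-1}AI^{\,r-3}_{i,\hat t}(\phi_i)\,\phi_i^*\omega_i^t$ is \emph{not} a rational $1$-form on $X_r$: its coefficients are weight-$(r-3)$ hyperlogarithms, i.e.\ multivalued transcendental functions on $Y_r$ (already for $r=4$ they are logarithms). So the dichotomy ``either it has residues or it lies in $\mathrm{H}^0(X_r,\Omega^1_{X_r})=0$'' does not apply, and the vanishing of the residues along every line only constrains certain boundary restrictions $AI^{\,r-3}_{i,\hat t}(\phi_i)\big|_\ell$; it does not force the coefficient of each basis element of $\HH$ — itself a sum of weight-$(r-3)$ antisymmetric hyperlogarithms composed with the $\phi_i$, hence a new multivariable identity \emph{on $X_r$}, not an instance of $\boldsymbol{{\bf HLog}(X_{r-1})}$ — to vanish identically on $Y_r$. (A nonzero combination such as $\log(\phi_1)\,d\phi_2$ is holomorphic and residue-free on $Y_r$ without being zero.) To repair this one must differentiate $r-2$ times, i.e.\ pass to the full symbol; that is precisely what Proposition \ref{Prop:Symbolic} packages: by injectivity of the iterated-integral map, the functional identity is \emph{equivalent} to the vanishing of the tensor $\sum_{\mc}\tau_\mc$ in $\wedge^{r-2}\HH\subset\wedge^{r-2}\CC^{\cL}$, which the paper then proves by a single Weyl-group computation (the sum transforms by the signature, and each exceptional $(r-2)$-tuple of disjoint lines lies on exactly two conic fibrations, whose contributions cancel), with no induction on $r$.

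Two further points. First, your local residue claim is wrong along a line with $\phi_i(\ell)=\infty$: there \emph{every} $\phi_i^*\omega_i^t$ ($t=1,\dots,r-2$) has residue $-1$ along $\ell$, so all $r-2$ terms contribute; they do recombine into $\pm$ the complete antisymmetric hyperlogarithm of $\Sigma_i\setminus\{\infty\}$ (as one checks by expanding $(\omega_1-\omega_m)\wedge\cdots\wedge(\omega_{m-1}-\omega_m)$), and the evaluation point $\infty$ is then a \emph{regular} point of that hyperlogarithm since $\infty\notin\Sigma_i\setminus\{\infty\}$ — so your obstacle (iii) about regularised limits at punctures is a red herring, while the actual difficulty sits in the reduction step above. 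Second, the simultaneous consistency of the signs $\epsilon_i$ across all $l_r$ blow-downs (your obstacle (ii)) is asserted rather than proved; the paper resolves exactly this point by defining $\tau_{\mc}=(-1)^w\,w\cdot\tau_{\mc_1}$ and checking via the signature character of $W_{\mc_1}$ that this is independent of the choice of $w$ (Lemma \ref{Lem:Lolo}). Also note that with the germ normalisation $\mathrm{II}^y$ all terms vanish at the base point, so no degeneration argument is needed to fix the additive constant.
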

%%%%%%%%%%%%%%%%%%%%%%%%%%%%%%%%%%%%%%%
%\noindent 
%A few comments regarding this result are in order. 
A few comments:
%%%%%%%%%%%%%%%%%%%%%%%%%%%%%%%%%%%%%%%
\begin{itemize}
\item 
%%%%%%%%%%%%%%%%%%%%%%%%%%%%%%%%%%%%%%%
The identity $\boldsymbol{{\bf H Log}(X_3)}$ 
 is nothing else but the logarithm identity  \eqref{Eq:EFA-3-terms-Log}
and $\boldsymbol{{\bf H Log}(X_4)}$ coincides  with  the geometric identity 
$\boldsymbol{\big(\mathcal Ab_{X_4}\big)}$ hence 
is equivalent to Abel's relation $\boldsymbol{\big(\mathcal Ab\big)}$.  
In contrast, the four other identities $\boldsymbol{{\bf H Log}(X_r)}$ 
  for $r=5,6,7,8$ are new.  
  \sk
  \item 
%  \vspace{-0.3cm}
%%%%%%%%%%%%%%%%%%%%%%%%%%%%%%%%%%%%%%%
For any $i=1,\ldots,\kappa_r$,  the suitable branch of the hyperlogarithm  $AI_i^{r-2}$  from the statement of the theorem  is defined 
 in a precise and constructive way (see \ref{Eq:AI-r-2}). Furthermore, in Theorem \ref{Thm:main} we  prove an invariant algebraic version of Theorem \ref{Thm:MMain}  
by constructing (an algebraic equivalent of) each term $\epsilon_i \,AI_i^{r-2}(\phi_i)$  by means of the natural action of the  Weyl group $W_r$ on only one term, 
which we may assume to be $AI_1^{r-2}(\phi_1)$. 
%%%%%%%%%%%%%%%%%%%%%%%%%%%%%%%%%%%%%%%
\sk  \item 
%%%%%%%%%%%%%%%%%%%%%%%%%%%%%%%%%%%%%%%
At  least when $r\leq 7$, there is a conceptual interpretation of why $\boldsymbol{{\bf H Log}(X_r)}$ holds true in terms of the space $\mathbf C^{{\mathcal L}_r}$ freely spanned by the set ${\mathcal L}_r$ of lines contained in $X_r$. This space is acted upon in a natural way by  $W_r$ and from a representation-theoretic perspective, the left-hand side  of 
$\boldsymbol{{\bf H Log}(X_r)}$ can be interpreted as the image 
of the  signature representation ${\bf sign}_r$ of the Weyl group $W_r$ in the $(r-2)$-th wedge product of $\mathbf C^{{\mathcal L}_r}$.  The reason why $\sum_{i=1}^{\kappa_r} \epsilon_i \,AI_i^{r-2}\big(\phi_i\big)$ vanishes identically is that  ${\bf sign}_r$ does not appear with positive multiplicity in the decomposition 
of $\wedge^{r-2}\mathbf C^{{\mathcal L}_r}$ in irreducible $W_r$-modules. 
%%%%%%%%%%%%%%%%%%%%%%%%%%%%%%%%%%%%%%%
\end{itemize}
%%%%%%%%%%%%%%%%%%%%%%%%%%%%%%%%%%%%%%%

%%%%%%%%%%%%%%%%%%%%%%%%%%%%%%%%%%%%%%%
%%%%%%%%%%%%%%%%%%%%%%%%%%%%%%%%%%%%%%%
\subsection{Structure of the paper}
%%%%%%%%%%%%%%%%%%%%%%%%%%%%%%%%%%%%%%%
%%%%%%%%%%%%%%%%%%%%%%%%%%%%%%%%%%%%%%%
Throughout the paper, we work in the complex analytic or algebraic setting.

In Section \S\ref{S:Preliminary-Material}, we recall the basic facts about hyperlogarithms and del Pezzo surfaces which will be used in the rest of the paper. In particular, we explain how the functional identities satisfied by hyperlogarithms can be proved  algebraically ({\it cf.}\,Proposition \ref{Prop:Symbolic}).  Section \S\ref{S:Proof} is the main section and that is where Theorem \ref{Thm:MMain} is proved.  Using 
 Proposition \ref{Prop:Symbolic}, its proof is essentially 
 reduced to the verification that a certain (antisymmetric) tensorial identity 
${\bf hlog}=0$  
  holds true.  
We include at the end of  Section  \S\ref{S:Proof} some considerations 
  regarding possible generalizations to higher dimensions (blow-ups of projective spaces at general points).
 Finally in Section \S\ref{S:HLog3}, we make the identity $\boldsymbol{\bf H Log}(X_5)$ explicit in some affine coordinates.

%%%%%%%%%%%%%%%%%%%%%%%%%%%%%%%%%%%%%%%
%%%%%%%%%%%%%%%%%%%%%%%%%%%%%%%%%%%%%%%
\textcolor{black}{
\subsection{\bf Acknowledgements}
A.-M. Castravet was partially supported by the ANR grant FanoHK. Thanks go to Igor Dolgachev and Jenia Tevelev for several useful discussions. 
L. Pirio benefited from interesting early exchanges with Maria Chlouveraki and Nicolas Perrin, to whom he is grateful. He also thanks Thomas Dedieu and Vincent Guedj for their interest in this work.}

%%%%%%%%%%%%%%%%%%%%%%%%%%%%%%%%%%%%%%%
%%%%%%%%%%%%%%%%%%%%%%%%%%%%%%%%%%%%%%%
\section{\bf Preliminaries}
\label{S:Preliminary-Material}
%%%%%%%%%%%%%%%%%%%%%%%%%%%%%%%%%%%%%%%
%%%%%%%%%%%%%%%%%%%%%%%%%%%%%%%%%%%%%%%
In this section, we recall some properties of hyperlogarithms and del Pezzo surfaces. 

%%%%%%%%%%%%%%%%%%%%%%%%%%%%%%%%%%%%%%%
\subsection{Hyperlogarithms}
Hyperlogarithms are multivalued holomorphic functions on $\mathbf P^1$ which were used by Poincar\'e and Lappo-Danilevsky for building solutions to  linear differential equations  with regular singular points on 
the Riemann sphere.  As modern references about hyperlogarithms, the reader can consult \cite{W1,Brown} or \cite[\S2.3]{BPP}.

%%%%%%%%%%%%%%%%%%%%%%%%%%%%%%%%%%%%%%%
\subsubsection{}
\label{SS:again}
%%%%%%%%%%%%%%%%%%%%%%%%%%%%%%%%%%%%%%%
Let $n\geq1$ and $\sigma_1,\ldots, \sigma_m$  be $n$ pairwise distinct complex numbers. We set 
$$\Sigma=\{\sigma_1,\ldots, \sigma_m,\infty\}\subset \PP^1
\qquad \mbox{ and } \qquad Y=\PP^1\setminus \Sigma\, .
$$ 
The 1-forms $\omega_k=d\,{\rm Log}(z-\sigma_k)=dz/(z-\sigma_k)$ for $k=1,\ldots,m$ form a basis of the space 
$$\HH_{\Sigma}={\bf H}^0\Big(
\mathbf P^1, \Omega^1_{\mathbf P^1} \big( {\rm Log}\,\Sigma\big) 
\Big)$$
 of global rational 1-forms on $\PP^1$ with logarithmic poles along $\Sigma$. 
 
 We fix a base point $y\in Y$. 
For any word $\omega_{k_1}\omega_{k_2}\cdots \omega_{k_w}$ on the $\omega_k$'s, of length $w\geq 1$, we define the {\it hyperlogarithm associated to it at $y$} as the holomorphic germ at this point, denoted by  
%$L_{\underline{\omega}}^y$
$L_{ \omega_{k_1}\ldots \omega_{k_w} }^y$
defined inductively on the length $w$  by successive integrations performed on a sufficiently small neighborhood of $y$,  according to the following relations: 
%%%%%%%%%%%%%%%%%%%%%%%%%%%%%%%%%%%%%%%
$$
L^y_{\omega_{k_w}}(z) =\int^{\, z}_y \omega_{k_w}= {\rm Log}\left( 
\frac{z-\sigma_{\hspace{-0.02cm} k_w}}{y-\sigma_{\hspace{-0.02cm} k_w}}
 \right)
\qquad 
\mbox{ and } 
\qquad 
L_{ \omega_{k_1}\ldots \omega_{k_w} }^y (z) = 
\int^{z}_y \frac{ L_{ \omega_{k_2}\ldots \omega_{k_w} }^y (u)}{u-\sigma_{k_1}} du
\quad \mbox{ for } w>1\, , 
$$
%%%%%%%%%%%%%%%%%%%%%%%%%%%%%%%%%%%%%%%
 for any $z$ sufficiently close to $y$ on $\PP^1$.  The germ $L_{ \omega_{k_1}\ldots \omega_{k_w} }\in \mathcal O_{Y,y}$ admits analytic continuation along any continuous path 
$\gamma_z: [0,1]\rightarrow \PP^1\setminus \Sigma$ joining $y$ to an arbitrary point 
$z\in Y$. The value  at $z$ of this analytic continuation only depends on the homotopy class of $\gamma_z$ and is easily seen to coincide with the iterated integral of the tensor 
$\omega_{k_1}\otimes \cdots \otimes \omega_{k_w}
\in \big( \HH_{\Sigma}\big)^{\otimes w}$ along $\gamma_z$: one has
%%%%%%%%%%%%%%%%%%%%%%%%%%%%%%%%%%%%%%%
 \begin{equation*}
 %%%%%%%%%%%%%%%%%%%%%%%%%%%%%%%%%%%%%%%
 L^y_{ \omega_{k_1}\ldots \omega_{k_w} }(z) =\int_{\gamma_z}\frac{du}{u-\sigma_{k_1}}\otimes \frac{du}{u-\sigma_{k_2}}\otimes\ldots\otimes
\frac{du}{u-\sigma_{k_w}}\, .
%%%%%%%%%%%%%%%%%%%%%%%%%%%%%%%%%%%%%%%
\end{equation*}
 %%%%%%%%%%%%%%%%%%%%%%%%%%%%%%%%%%%%%%%
 
The germ  $L_{ \omega_{k_1}\ldots \omega_{k_w}}^y$ gives rise to a global but multivalued holomorphic  function on $Y$, with branch points at the $\sigma_k$'s, which we will still refer to as the hyperlogarithm associated to $\omega_{k_1}\ldots \omega_{k_w} $ 
and we denote by 
$L_{ \omega_{k_1}\ldots \omega_{k_w}}$. 

More formally, we  consider the  map (where ${\rm II}$ stands for ``Iterated Integral'') 
\begin{equation}\label{iterated integral}
{\rm II}^y_Y: \oplus_{w\geq0} \big( \HH_\Sigma)^{\otimes w}\longrightarrow \cO_{\PP^1,y}\, ,
\quad
\omega_{k_1}\otimes\omega_{k_2}\otimes\ldots\otimes\omega_{k_w}\longmapsto
%{\rm II}^y({\omega_1\omega_2\ldots \omega_m})=
L^y_{\omega_{k_1}\cdots \omega_{k_w}} \, .
\end{equation}
which in addition to being $\CC$-linear, can be proved to be a morphism of algebras if  $\oplus_{w\geq0} \big( \HH_\Sigma)^{\otimes w}$ is endowed with the so-called ``shuffle product'' (but we will not use this property in the rest of the paper). 
The image ${\rm Im}\big({\rm II}^y_Y\big)$ is a complex subalgebra of $ \cO_{\PP^1,y}$ and its elements are called 
 {\it (germs at $y$ of) hyperlogarithms}.  Moreover, the morphism 
 \eqref{iterated integral} is injective.  Consequently, for any germ of hyperlogarithm $L
 \in \cO_{\PP^1,y}$,  the minimum $w(L)$ of integers $w\geq 0$ such that $L$ belongs to 
 the image of 
 $\oplus_{w'\geq w } \big( \HH_\Sigma)^{\otimes w'}$ by 
${\rm II}^y_Y$ 
  is well-defined and is called the {\it weight} of $L$. 

As multivalued functions on $\mathbf P^1$, the monodromy of hyperlogarithms can be proved to be unipotent (see \cite[Thm.\,8.2]{W1}) from which it follows that these functions also form an algebra and that the notion of  weight still makes sense for them.

%%%%%%%%%%%%%%%%%%%%%%%%%%%%%%%%%%%%%%%
\subsubsection{}
%%%%%%%%%%%%%%%%%%%%%%%%%%%%%%%%%%%%%%%
The most classical example is for $m=2$ with $\sigma_1=0$ and $\sigma_2=1$ which encompasses the case of classical polylogarithms. Indeed, 
setting $\eta_0=dz/z$ and $\eta_1=dz/(1-z)$ in this special case, 
as multivalued hyperlogarithms on $\mathbf P^1\setminus \{0,1,\infty\}$, 
one has 
$$
{\rm Log}=L_{\eta_0}\, , \qquad R=\frac{1}{2}\left( L_{\eta_0\eta_1}- 
L_{\eta_1\eta_0}
\right)  \qquad \mbox{ and }\qquad {\bf L}{\rm i}_{n+1}=L_{\eta_0^{\otimes n}\eta_1}
\quad \mbox{ for any }\,  n\geq 0\, .$$

Working  locally with germs of hyperlogarithms is more involved but 
removes all ambiguity  regarding the choice of a branch  of the functions considered. 
For instance, for any $y\in \mathbf P^1\setminus \{0,1,\infty\}$, 
one has that the weight 2  hyperlogarithm  at $y$ whose symbol is $\frac12(\eta_0\eta_1-\eta_1\eta_0)$ is the holomorphic function defined by 
$$
R^y(z)=
\frac{1}{2}\left(L_{\eta_0\eta_1}(z)-L_{\eta_1\eta_0}(z)\right)=\frac{1}{2}\bigintsss_{y}^z\left(\frac{{\rm Log}\left(\frac{u-1}{y-1}\right)}{u} - \frac{{\rm Log}\left(\frac{u}{y}\right)}{u-1}\right)\, du$$ 
for any $z\in (\mathbf P^1,y)$.   This hyperlogarithm has to be seen as a holomorphic version, localized at $y$, of Rogers' dilogarithm defined in \ref{Eq:R}.

%%%%%%%%%%%%%%%%%%%%%%%%%%%%%%%%%%%%%%%
\subsubsection{}
%%%%%%%%%%%%%%%%%%%%%%%%%%%%%%%%%%%%%%% 
We now define the hyperlogarithms involved in this paper, noted by $AI^{w}$.  Even if our results is for weights $w$ less than or equal to 6, the definition of $AI^{w}$ is completly uniform in $w$,  hence we will not impose any restriction on the weight in this subsection. 
\sk

We use the notation of \S\ref{SS:again} again: $\Sigma=\{\sigma_1,\ldots,\sigma_m,\infty\}$, $\omega_k=dz/(z-\sigma_k)$ for $k=1,\ldots,m$, etc.
We introduce a special class of hyperlogarithms  on $\PP^1$, with respect to $\Sigma$, of weight $m=|\Sigma|-1$.  
For a $\CC$-vector space $V$, we identify $\wedge^m V$ with its image in $V^{\otimes m}$ under the standard embedding: 
$$\wedge^m V\hra V^{\otimes m},\qquad v_1\wedge\ldots\wedge v_m\longmapsto
\frac{1}{m!}\bigg(\sum_{\tau \in \mathfrak S_{m}}(-1)^\tau v_{\tau(1)}\otimes\ldots\otimes v_{\tau(m)}\bigg)\, ,  $$
where $(-1)^\tau$ stands for the signature of $\tau$ for any permutation $\tau \in \mathfrak S_{m}$. 

%%%%%%%%%%%%%%%%%%%%%%%%%%%%%%%%%%%%%%%%
\begin{defn}
\label{AI}
%%%%%%%%%%%%%%%%%%%%%%%%%%%%%%%%%%%%%%%%
The (complete) {\bf anti-symmetric hyperlogarithm} $AI^m$ of weight $m$ on $\PP^1$, 
with respect to $\Sigma$, is the hyperlogarithm whose germ at any $y\in Y=\PP^1\setminus \Sigma$ is 
obtained by taking the image of 
$\omega_1\wedge\ldots\wedge\omega_m\in \wedge^m\HH_\Sigma\subset \big(\HH_\Sigma\big)^{\otimes m} $  under the map \eqref{iterated integral}: as germs at $y$, one has 
%%%%%%%%%%%%%%%%%%%%%%%%%%%%%%%%%%%%%%%%
$$AI_{\Sigma}^{m}={\rm II}^y_Y\big(\omega_1\wedge\ldots\wedge\omega_m\big)\, .$$
\end{defn}
%%%%%%%%%%%%%%%%%%%%%%%%%%%%%%%%%%%%%%%%

One verifies that $\omega_1\wedge\ldots\wedge\omega_m\in \wedge^m \HH_\Sigma$  is canonically defined, up to a sign. It follows that $\pm AI_{\Sigma}^{m}$ is canonically defined by $\Sigma$. Here are some easy remarks about the first three examples: 
%%%%%%%%%%%%%%%%%%%%%%%%%%%%%%%%%%%%%%%%
\begin{enumerate}
%%%%%%%%%%%%%%%%%%%%%%%%%%%%%%%%%%%%%%%%
\item[$-$] $m=1$  and $\sigma_1=0$;  one has   $AI^1_{\{0,\infty\}}={\rm Log}$ up to sign; 
\sk 
%%%%%%%%%%%%%%%%%%%%%%%%%%%%%%%%%%%%%%%%
\item[$-$] $m=2$  and $\sigma_1=0$, $\sigma^2=1$;  up to sign, one recovers 
the holomorphic version of 
Rogers' dilogarithm discussed above
since $AI^2_{\{0,1,\infty\}}=R^y$ as germs at any $y \in \PP^1\setminus \{0,1,\infty\}$; 
\sk 
%%%%%%%%%%%%%%%%%%%%%%%%%%%%%%%%%%%%%%%%
\item[$-$] the case $m=3$  is new since,   the weight 3 antisymmetric hyperlogarithm has not been considered in the literature before as far we know. For any $y\in \mathbf P^1\setminus \Sigma$ with $\Sigma=\{\sigma_1,\sigma_2,\sigma_3,\infty\}$, one can give an explicit integral expression for  
$AI_\Sigma^3$ (see \eqref{Eq:AI-a,b,c}).  However, one can prove that 
$AI_{\Sigma}^{3}$ can be expressed as the following linear combination of products of 
antisymmetric polylogarithms of weight 1 or 2 since 
for suitable choices of the sign of $AI_\Sigma^3$ and of the weight two hyperlogarithms $AI_{\Sigma\setminus 
\{\sigma_k\}} ^2$'s for $k=1,2,3$, the following relation holds true
$$AI_{\Sigma}^3\big(z\big)=
 \frac13 %\bigg( 
 \sum_{k=1}^3 (-1)^{k-1} {\rm Log}\left( 
 \frac{z-\sigma_k}{y-\sigma_k}
 \right)\cdot 
  AI^2_{\Sigma\setminus 
\{\sigma_k\}} \big(z\big)
 $$ 
 for any $z\in \mathbf P^1$ sufficiently close of the previously fixed base point  $y$. 
%%%%%%%%%%%%%%%%%%%%%%%%%%%%%%%%%%%%%%%%
\end{enumerate}
%%%%%%%%%%%%%%%%%%%%%%%%%%%%%%%%%%%%%%%%

%%%%%%%%%%%%%%%%%%%%%%%%%%%%%%%%%%%%%%%
%%%%%%%%%%%%%%%%%%%%%%%%%%%%%%%%%%%%%%%
\subsubsection{\bf Pull-backs of hyperlogarithms}
\label{SS: General case}
%%%%%%%%%%%%%%%%%%%%%%%%%%%%%%%%%%%%%%%
%%%%%%%%%%%%%%%%%%%%%%%%%%%%%%%%%%%%%%%
Let $Y$ be a (not necessarily compact) complex manifold and let $\HH\subset {\bf H}^0\big(Y,\Omega^1_Y)$ be a subspace of 
holomorphic $1$-forms on $Y$, such that: 
%%%%%%%%%%%%%%%%%%%%%%%%%%%%%%%%%%%%%%%
\begin{equation}
\label{*}
%%%%%%%%%%%%%%%%%%%%%%%%%%%%%%%%%%%%%%%
\text{\it For all } \, \omega,\omega'\in \HH, \text{\it one has } \, 
d\omega=0 \ \, \text{\it  and } \, \  \omega\wedge\omega'=0\,.
%%%%%%%%%%%%%%%%%%%%%%%%%%%%%%%%%%%%%%%
\end{equation}
%%%%%%%%%%%%%%%%%%%%%%%%%%%%%%%%%%%%%%%
The conditions \eqref{*} are satisfied if, for example,  
$\HH=\phi^* {\bf H}^0\big(C,\Omega^1_C)$, for some regular submersion $\phi: Y\rightarrow C$, with $C$ a smooth (not necessarily compact) curve. 
The conditions  \eqref{*}  ensure that for any holomorphic $1$-forms $\omega_1\ldots,\omega_w$ in $\HH$, the iterated integral 
${\rm II}_{\omega_1\omega_2\ldots \omega_w}^y
=\int^\bullet \omega_1\otimes \omega_2\otimes \cdots  \otimes \omega_w  $, defined inductively as in \S\ref{SS:again}, depends only on the homotopy class of the path $\gamma_z$. Hence, for all $m\geq1$, 
there is $\CC$-linear map ${\rm II}^y_Y: \oplus_{w\geq0}\HH^{\otimes w}\ra\cO_{Y,y}$, defined as in (\ref{iterated integral}).  Furthermore, this map is an injective 
morphism of complex algebras.\sk 

A special situation occurs when the conditions \eqref{*}  are not necessarily satisfied for all elements of $\HH$, but there exist subspaces 
$\HH_i\subset \HH$ for $i=1\ldots, d$, such that for each $i$, $\HH_i$ satisfies \eqref{*}. In this case, we have again a well-defined \emph{injective} 
$\CC$-linear map given by the iterated integrals on the subspaces $\sum_{i=1}^d\big( \HH_i\big)^{\otimes w}\subset\HH^{\otimes w}$, for all $w\geq0$:
%%%%%%%%%%%%%%%%%%%%%%%%%%%%%%%%%%%%%%%
\begin{equation}\label{several H's}
{\rm II}^y_Y: \oplus_{w\geq0}\bigg(\sum_{i=1}^d\HH_i^{\otimes w}\bigg)\ra\cO_{Y,y}. 
\end{equation}
%%%%%%%%%%%%%%%%%%%%%%%%%%%%%%%%%%%%%%%

%%%%%%%%%%%%%%%%%%%%%%%%%%%%%%%%%%%%%%%
%%%%%%%%%%%%%%%%%%%%%%%%%%%%%%%%%%%%%%%
\subsubsection{\bf Hyperlogaritms for webs}
\label{SS: webs}
%%%%%%%%%%%%%%%%%%%%%%%%%%%%%%%%%%%%%%%
Fix $m\geq1$. The situation we consider here is when  $X$ a  complex projective manifold, $\phi_i: X\rightarrow\PP^1$ surjective morphisms  (with $i=1,\ldots, d$), such that 
there exists subsets of $m+1$ distinct points  $\Sigma_i=\{\sigma_i^1,\ldots, \sigma_i^m,\infty\}\subset\PP^1$, such that 
$\phi_i: X\setminus \phi_i^{-1}(\Sigma_i)\ra\PP^1\setminus\Sigma_i$ is a regular submersion for all $i$, and the union $D\subset X$ of all divisors 
$\phi_i^{-1}(\sigma^k_i)\subset X$, for all $i$ and $k=1,\ldots,m$  is such that  one has $d\phi_i\wedge d\phi_j\neq 0$ on $Y=X\setminus D$, for all $i,j=1,\ldots, d$ distinct. 
Then the  maps $\phi_i$ ($i=1,\ldots,d$) define  a regular \emph{$d$-web of hypersurfaces} on $Y$.  

 In such a situation, we consider the following notations: 
%%%%%%%%%%%%%%%%%%%%%%%%%%%%%%%%%%%%%%%
\begin{itemize}
%%%%%%%%%%%%%%%%%%%%%%%%%%%%%%%%%%%%%%%
\item  we denote by $\HH={\bf H}^0\big(X,\Omega_X^1({\rm Log} D)\big)\subset {\bf H}^0\big(Y,\Omega^1_Y\big)$ the space of logarithmic 1-forms on $X$ with logarithmic 
poles along to $D$; 
\sk 
%%%%%%%%%%%%%%%%%%%%%%%%%%%%%%%%%%%%%%%
\item for $i=1,\ldots,d$, we set $Y_i=\mathbf P^1\setminus \Sigma_i$ and 
\sk
\begin{itemize}
\sk 
%%%%%%%%%%%%%%%%%%%%%%%%%%%%%%%%%%%%%%%
%\item[$-$] $Y_i=\mathbf P^1\setminus \Sigma_i$; 
%\sk 
%%%%%%%%%%%%%%%%%%%%%%%%%%%%%%%%%%%%%%%
\item[$-$] $\theta^j_i={dz}/{(z-\sigma_i^k)}$
for $k=1,\ldots,m$, which form a basis of
$$\HH_{\Sigma_i}={\bf H}^0\Big(\PP^1,\Omega^1_{\PP^1}\big({\rm Log}\,\Sigma_i\big)\Big)$$
%%%%%%%%%%%%%%%%%%%%%%%%%%%%%%%%%%%%%%%
\item[$-$] 
$\Theta^j_i=\phi_i^*\big( \theta^j_i\big)=
{d\phi_i}/{(\phi_i-\sigma_i^k)}$
for $k=1,\ldots,m$, which form a basis of 
$$
\HH_i=\phi_j^*\Big(\HH_{\Sigma_i}\Big) \subset\HH; $$
%%%%%%%%%%%%%%%%%%%%%%%%%%%%%%%%%%%%%%%
\item[$-$]  
$\theta_i=\theta^1_i\wedge\ldots\wedge\theta^m_i \in\wedge^m\HH_{\Sigma_i}\subset \big(\HH_{\Sigma_i}\big) ^{\otimes m}$ and 
$\Theta_i=\phi_i^*\big(\theta_i\big)=\wedge_{k=1}^m 
\Theta^k_i \in\wedge^m\HH_{i}\subset \big(\HH_{i}\big) ^{\otimes m}$;
\sk 
%%%%%%%%%%%%%%%%%%%%%%%%%%%%%%%%%%%%%%%
%%%%%%%%%%%%%%%%%%%%%%%%%%%%%%%%%%%%%%%
\end{itemize}
%%%%%%%%%%%%%%%%%%%%%%%%%%%%%%%%%%%%%%%
\item for any $y\in Y$, we set $y_i=\phi_i(y)\in \PP^1\setminus \Sigma_i$  and we consider the (germs of) weight $m$ hyperlogarithms
$$
AI^m_{\Sigma_i}={\rm II}_{Y_i}^{y_i}\big( \theta_i \big) \in \mathcal O_{\PP^1,y_i}
\quad \mbox{for } i=1,\ldots,d\, 
\qquad \mbox{and} \qquad 
AI^m_{i}={\rm II}_Y^y\big( \Theta_i \big) \in \mathcal O_{Y,y}
\, .
$$
%%%%%%%%%%%%%%%%%%%%%%%%%%%%%%%%%%%%%%%
\end{itemize}
%%%%%%%%%%%%%%%%%%%%%%%%%%%%%%%%%%%%%%%

One verifies easily that for any $i$,  the following relation holds true as germs on $Y$ at $y$: 
$$AI^m_{i}= AI^m_{\Sigma_i} \circ \phi_i  .
$$

For any $i=1,\ldots,d$, the hyperlogarithm $AI^m_{i}$ (or equivalently $AI^m_{\Sigma_i} $) is only well-defined up to multiplication by $-1$. For each $i$, we fix one of the two possible choices for $AI^m_{i}$. 
The following result, although elementary to prove, is key since it will allow  us to handle algebraically the functional identity we want to establish in \S3:% (we use the notations above):  

%%%%%%%%%%%%%%%%%%%%%%%%%%%%%%%%%%%%%%%
\begin{prop} 
\label{Prop:Symbolic}
%%%%%%%%%%%%%%%%%%%%%%%%%%%%%%%%%%%%%%%
%We use the notation above.  
For $c_1\ldots, c_d\in \CC$, the following statements are equivalent: 
\begin{enumerate}
\item[i.]  One has $\sum_{i=1}^d c_i\,\Theta_i=0$ in $\wedge^n\HH\subset \HH^{\otimes n}$. 
\sk
%%%%%%%%%%%%%%%%%%%%%%%%%%%%%%%%%%%%%%%
\item[ii.] There exists $y\in Y$ such that $\sum_{i=1}^d c_i\,{AI}^m_{\Sigma_i}(\phi_i)=0$ as a holomorphic germ at $y$ on $Y$.
\sk
%%%%%%%%%%%%%%%%%%%%%%%%%%%%%%%%%%%%%%%
\item[iii.] For any $y\in Y$, $\sum_{i=1}^d c_i\,{AI}^m_{\Sigma_i}(\phi_i)=0$ as a holomorphic germ at $y$ on $Y$.
\sk
%%%%%%%%%%%%%%%%%%%%%%%%%%%%%%%%%%%%%%%
\item[iv.] One has $\sum_{i=1}^d c_i\,{AI}^m_{\Sigma_i}(\phi_i)=0$ as multivalued functions on $Y$. 
\end{enumerate}

%%%%%%%%%%%%%%%%%%%%%%%%%%%%%%%%%%%%%%%
\end{prop}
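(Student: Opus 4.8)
The plan is to prove the chain of implications $(i)\Rightarrow(iii)\Rightarrow(iv)\Rightarrow(ii)\Rightarrow(i)$, with the bulk of the work going into $(i)\Rightarrow(iii)$ and $(ii)\Rightarrow(i)$; the implications $(iii)\Rightarrow(iv)$ and $(iv)\Rightarrow(ii)$ are essentially formal.

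First, for $(i)\Rightarrow(iii)$: fix an arbitrary $y\in Y$ and set $y_i=\phi_i(y)$. By the compatibility relation $AI^m_i = AI^m_{\Sigma_i}\circ\phi_i$ recorded just before the statement, we have $AI^m_{\Sigma_i}(\phi_i) = AI^m_i = {\rm II}^y_Y(\Theta_i)$ as germs at $y$. Hence
$$
\sum_{i=1}^d c_i\, AI^m_{\Sigma_i}(\phi_i) = \sum_{i=1}^d c_i\, {\rm II}^y_Y(\Theta_i) = {\rm II}^y_Y\!\Big(\sum_{i=1}^d c_i\,\Theta_i\Big),
$$
using that ${\rm II}^y_Y$ is $\CC$-linear on $\oplus_{w\ge 0}\big(\sum_i \HH_i^{\otimes w}\big)$, which is the domain relevant here since each $\Theta_i$ lies in $(\HH_i)^{\otimes m}$. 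If $(i)$ holds, the argument is $0$, so the germ at $y$ vanishes; since $y$ was arbitrary, $(iii)$ follows. Note this uses that $\sum_i c_i\Theta_i = 0$ in $\wedge^m\HH\subset\HH^{\otimes m}$ forces the same vanishing inside $\sum_i\HH_i^{\otimes m}$, which is automatic because all $\Theta_i$ already live there and the embeddings are compatible. I should be careful that the index on the wedge power in $(i)$ reads $\wedge^n$ while $m$ is the weight; I will treat $n=m$ throughout (this appears to be a typo in the statement) and say so.

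Next, $(iii)\Rightarrow(iv)$ and $(iv)\Rightarrow(ii)$: the passage from germs-at-every-point to multivalued functions is immediate because a multivalued function on the connected manifold $Y$ that vanishes on some open set vanishes identically on its domain, and conversely restriction of the (identically zero) multivalued function to a neighbourhood of any $y$ gives the zero germ; $(iv)\Rightarrow(ii)$ is then trivial since $Y\neq\emptyset$. The one genuinely remaining implication is $(ii)\Rightarrow(i)$, and this is where I expect the main obstacle. Suppose $\sum_i c_i\,AI^m_{\Sigma_i}(\phi_i)=0$ as a germ at some $y\in Y$. By the same rewriting as above this says ${\rm II}^y_Y\big(\sum_i c_i\Theta_i\big)=0$ in $\cO_{Y,y}$. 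The crucial input is the \emph{injectivity} of the map ${\rm II}^y_Y$ on $\oplus_{w\ge0}\big(\sum_{i=1}^d\HH_i^{\otimes w}\big)$, asserted in \S\ref{SS: General case} (and the fact, also noted there, that the conditions \eqref{*} hold on each $\HH_i=\phi_i^*{\bf H}^0(\PP^1,\Omega^1)$ because each $\phi_i$ restricts to a submersion onto a curve). Injectivity forces $\sum_i c_i\Theta_i=0$ as an element of $\sum_i\HH_i^{\otimes m}\subset\HH^{\otimes m}$; since each $\Theta_i=\theta^1_i\wedge\cdots\wedge\theta^m_i$ lies in $\wedge^m\HH$ under the fixed antisymmetrization embedding, the sum lies in $\wedge^m\HH$, giving $(i)$. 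The one point that needs care — and the likely sticking point — is making sure that the injectivity statement from \S\ref{SS: General case} is being applied with the correct source: we must know that the tensor $\sum_i c_i\Theta_i$ lies in the domain $\oplus_w\big(\sum_i\HH_i^{\otimes w}\big)$ on which ${\rm II}^y_Y$ is injective, which holds since it is a (finite) sum with each summand in $\HH_i^{\otimes m}$; and we must invoke that the analytic continuation / monodromy considerations of \S\ref{SS:again} and \S\ref{SS: General case} indeed guarantee well-definedness and injectivity in this pulled-back, several-$\HH_i$ setting — a point already established in the text, so I will simply cite \eqref{several H's}. With injectivity in hand the four statements are equivalent, completing the proof.
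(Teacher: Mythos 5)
Your argument is correct and is essentially the paper's own proof: both rewrite $\sum_i c_i\,AI^m_{\Sigma_i}(\phi_i)$ as ${\rm II}^y_Y\big(\sum_i c_i\,\Theta_i\big)$ via linearity and then invoke the injectivity of the iterated-integral map \eqref{several H's} on $\oplus_{w\geq0}\big(\sum_i\HH_i^{\otimes w}\big)$ to get the equivalence. Your extra remarks (the $n=m$ typo, the formal implications among ii--iv) are harmless elaborations of the same one-line argument.
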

%%%%%%%%%%%%%%%%%%%%%%%%%%%%%%%%%%%%%%%
\begin{proof}
For a point $y\in Y$, we have $\sum_{i=1}^d c_i\,{AI}^m_{\Sigma_i}(\phi_i)={\rm II}^y_Y(\sum_{i=1}^d c_i\Theta_i)$, where ${\rm II}^y_Y$ is the integration map in (\ref{several H's}). The statement now follows from the fact that this map is injective. 
\end{proof}

%%%%%%%%%%%%%%%%%%%%%%%%%%%%%%%%%%%%%%%
%%%%%%%%%%%%%%%%%%%%%%%%%%%%%%%%%%%%%%%
\subsection{Del Pezzo surfaces}
\label{SS:dPd}
%%%%%%%%%%%%%%%%%%%%%%%%%%%%%%%%%%%%%%%
%%%%%%%%%%%%%%%%%%%%%%%%%%%%%%%%%%%%%%%
Del Pezzo surfaces are smooth projective surfaces with ample anti-canonical line bundle. A del Pezzo surface is isomorphic to either $\PP^1\times\PP^1$ or 
a blow-up ${\bf Bl}_{p_1,\ldots,p_r}\big( \PP^2\big)$ ($r\geq8$) at $r$ points $p_1,\ldots,p_r$ 
in general position in $\PP^2$. 

In what follows we consider del Pezzo surfaces 
$X_r={\bf Bl}_{p_1,\ldots,p_r}\big( \PP^2\big)$ for $3\leq r\leq 8$. We fix a blow-up map $\beta=\beta_r: X_r\ra\PP^2$. 
We refer to \cite[Chap.\;IV]{Manin} or \cite[Chap.\;8]{Dolgachev}
\black
for general facts about del Pezzo surfaces. Here we make a list of the properties that we will use. 
\black
%%%%%%%%%%%%%%%%%%%%%%%%%%%%%%%%%%%%%%%
%%%%%%%%%%%%%%%%%%%%%%%%%%%%%%%%%%%%%%%
\begin{enumerate}
%%%%%%%%%%%%%%%%%%%%%%%%%%%%%%%%%%%%%%%
%%%%%%%%%%%%%%%%%%%%%%%%%%%%%%%%%%%%%%%
\item[(1).]
The Picard group $\Pic(X_r)$ is a free abelian group generated by the classes $e_i$ of the exceptional divisors $\beta^{-1}(p_i)$ (for $i=1,\ldots,r$) and the class $h$ of the preimage under $\beta$ of a general line in $\PP^2$.  The intersection pairing on $X_r$ is determined by $h^2=1$, $h\cdot e_i=0$, $e_i\cdot e_j=-\delta_{ij}$, for all $i,j\in\{1,\ldots,r\}$. 
\mk
%%%%%%%%%%%%%%%%%%%%%%%%%
\item[(2).]
The canonical divisor is $K=K_{X_r}=-3h+\sum_{i=1}^r e_i$ and the \emph{degree} of $X_r$ is $(-K)^2=9-r$. 
\mk
%%%%%%%%%%%%%%%%%%%%%%%%%
%\vspace{-0.35cm}
\item[(3).] A \emph{line} on $X_r$ is a smooth curve $\ell\subset X_r$ with $ K \cdot \ell=\ell^2=-1$. Such a line is necessarily a smooth rational curve and it can be naturally identified with its class in $\Pic(X_r)$.
We denote by $\cL_r$ the set of lines on $X_r$. 
\mk
%%%%%%%%%%%%%%%%%%%%%%%%%
\item[(4).] A \emph{conic} on $X_r$ is a  curve $C\subset X_r$ with $C\cdot K=-2$ and  $C^2=0$.  When $C$ is smooth, it is necessarily a smooth rational curve. Otherwise, it is the sum of
two concurrent lines on $X_r$. We denote by $\cK_r$ the set of conic classes. 

A \emph{conic fibration} $X_r\ra\PP^1$ is given by the complete linear system of a conic on $X_r$. Hence,  $\cK_r$ corresponds to the set of conic fibrations up to projective equivalence. 
\mk
%%%%%%%%%%%%%%%%%%%%%%%%%
\item[(5).] The orthogonal complement 
$K^\perp=\big\{ \, \alpha \in \Pic(X_r)\, \big\lvert \, \alpha\cdot K=0\, \big\}$ is free of rank $r$ and spanned by the classes 
%%%%%%%%%%%%%%%%%%%%%%%%%
$$\alpha_i=e_i-e_{i+1}\quad \mbox{for } i=1,\ldots,r-1
\qquad \mbox{ and } \qquad 
\alpha_r=3h-e_1-e_2-e_3\,.$$
%%%%%%%%%%%%%%%%%%%%%%%%%
Together with the positive definite symmetric form $ - ( \cdot,\cdot)\lvert_{K^\perp}$ coming from the intersection pairing,  
the $\alpha_i$'s define a root system of type $E_r$, with the convention that $E_4=A_4$, $E_5=D_5$, see the following figure: 
% ({\it cf.}\,Figure \ref{Fig:DynkinDiagram}). 
 %%%%%%%%%%%%%%%%%%%%%%%%%%%%%%%
%\vspace{-0.35cm}
 \begin{figure}[h!]
\begin{center}
\scalebox{2}{
 \includegraphics{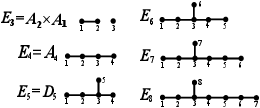}}
 \vspace{-0.1cm}
\caption{Dynkin diagrams $E_r$ (with $i$ standing for %the fundamental root 
 $\alpha_i$ for any $i=1,\ldots,r$)} 
\label{Fig:DynkinDiagram}
\end{center}
\end{figure}
 %%%%%%%%%%%%%%%%%%%%%%%%%%%%%%%
\mk
%%%%%%%%%%%%%%%%%%%%%%%%%
\vspace{-0.35cm}
\item[(6).]  For any $i=1,\ldots,r$, the map 
%%%%%%%%%%%%%%%%%%%%%%%%%
\begin{equation}
\label{Eq:s-alpha-i}
%%%%%%%%%%%%%%%%%%%%%%%%%
s_{\alpha_i} : % \Pic(X_r)\longrightarrow  \Pic(X_r) , \, 
\beta\longmapsto \beta+\big(\,\beta\cdot\alpha_i\,\big)\,\alpha_i
%%%%%%%%%%%%%%%%%%%%%%%%%
\end{equation}
%%%%%%%%%%%%%%%%%%%%%%%%%
 is an involutive automorphism of 
$\big( \Pic(X_r),  ( \cdot,\cdot)\big)$ which lets $K$ invariant.  The restrictions of 
the $s_{\alpha_i}$'s  to $R_r=K^\perp\otimes_{\mathbf Z} \mathbf R$ are orthogonal reflections and they 
generate a Weyl group of type $E_r$, denoted by $W_r$. 
In particular, $W_r$ is finite.
\mk
%%%%%%%%%%%%%%%%%%%%%%%%%
\item[(7).]
For simplicity, we set  $s_i=s_{\alpha_i}$ for any $i$.
When $i=1,\ldots, r-1$, the reflection $s_i$ acts on $\Pic(X_r)$ by interchanging $e_i$ with $e_{i+1}$, 
leaving other exceptional classes $e_k$ and $h$ fixed. The reflection $s_r$ acts as a Cremona transformation, i.e., one has  $s_r(h)=2h-e_1-e_2-e_3$ and $s_r(e_i)= h-e_j-e_k$ 
for $\{i,j,k\}=\{1,2,3\}$ and $s_r$ leaves $e_k$ fixed for $k=4,\ldots,r$. 
\mk
%%%%%%%%%%%%%%%%%%%%%%%%%
\item[(8).]  For an element $w\in W_r$, we denote $(-1)^w=(-1)^{l(w)}\in \{\, \pm 1\,\}$ the \emph{signature} of $w$. Here $l(w)$ stands for the length of $w$
which by definition is  the smallest non negative integer $m$ 
such that one can write $w=s_{i_1}\cdots s_{i_m}$ for some $i_1,\ldots,i_m$ in $\{1,\ldots,r\}$. The map $W_r\rightarrow \{\, \pm 1\,\}$, $w\mapsto (-1)^w$ is a group morphism, called the {\it signature}. The associated   {\it signature representation} is the unique non trivial representation of $W_r$ of dimension 1.
\mk
%%%%%%%%%%%%%%%%%%%%%%%%%
\item[(9).] Any line $\ell$ (respectively, any conic class $\mc$) on $X_r$ belongs to the $W_r$-orbit of the exceptional divisor $e_1$ (respectively,  $h-e_1$). 
This follows from Noether's inequality (e.g., see \cite[p.\,288]{Dolgachev2}). 
Equivalently:  $W_r$ acts transitively on the set $\cL_r$ of lines (respectively, on the set $\cK_r$ of conic classes). 
\mk
%%%%%%%%%%%%%%%%%%%%%%%%%
\item[(10).] Any conic fibration $\phi_\mc: X_r\ra \PP^1$ corresponding to a conic class $\mc$ has exactly $r-1$ reducible fibers, each a union of two lines intersecting at a point. 
In particular, each conic class is of the form $\mc=\ell+\ell'$, with $\ell$, $\ell'$ lines such that $\ell\cdot \ell'=1$. We will often 
write $\ell+\ell'$ to indicate the reducible conic $\ell\cup\ell'$. 
\mk
%%%%%%%%%%%%%%%%%%%%%%%%%
\item[(11).] 
%The Weyl group $W_r$ naturally acts by permutations on the set of lines $\cL_r$.  
For $r>3$, the stabilizer $W_{e_r}$ of $e_r\in \cL_r$  is generated by the reflections $s_i$'s for $i$ ranging from 1 to $r$  and distinct from $r-1$. It follows that $W_{e_r}$ is isomorphic to the Weyl group associated to the Dynkin diagram $E'_{r-1}$ obtained by removing  the $(r-1)$-th 
 node as well as the edge adjacent to it from $E_r$, that is $W_{e_r}\simeq W(E_{r-1})$.  
In particular, for $r>3$ we have 
$l_r=\lvert  \cL_r\lvert = \lvert W_r\lvert / \lvert W_{r-1}\lvert$.   For $r=3$, one has 
 $W_{e_3}=\langle s_1\rangle \simeq \{\pm 1\}$ and $l_3=\lvert \mathcal L_3\lvert 
 %=\lvert W_3 \lvert / 2
 =6 $.

\mk
%%%%%%%%%%%%%%%%%%%%%%%%%
\item[(12).] The stabilizer $W_{\mc_1}$ of $\mc_1=h-e_1\in \cK$  is generated by the reflections $s_2,\ldots, s_r$. 
This subgroup of $W_r$ is isomorphic to the Weyl group associated to the Dynkin diagram $E''_{r-1}$ obtained  by removing the first node  as well as the edge adjacent to it from $E_r$, which hence is of type $D_{r-1}$.\footnote{ Here we use the convention that $D_2=A_1\times A_1$ and $D_3=A_3$.} In particular, we have 
$\kappa_r=\lvert  \cK_r\lvert = \lvert W_r\lvert / \lvert W(D_{r-1})\lvert
= \lvert W_r\lvert /(2^{r-2} (r-1)!)$. 

\mk
%%%%%%%%%%%%%%%%%%%%%%%%%
\item[(13).] 
For any mutually disjoint $r-2$ lines $\ell_1,\ldots,\ell_{r-2}$, there exists an element $w$ of the Weyl group $W$ such that 
$w\cdot e_i=\ell_i$ for all $i=1,\ldots, r-2$ ({\it cf.}\,Corollary 26.8.(i) in \cite{Manin}). 
\end{enumerate}

Some numerical invariants associated to the Weyl groups $W_r$ and the sets 
of lines and conics $\cL_r$ and $\cK_r$ are gathered in the following table: 
%%%%%%%%%%%%%%%%%%%%%%%%%%%%%%
\begin{table}[!h]
\scalebox{1}{\begin{tabular}{|l||c|c|c|c|c|c|}
\hline
${}^{}$ \hspace{0.4cm}  
\begin{tabular}{c}\vspace{-0.35cm}\\
$\boldsymbol{r}$
\vspace{0.13cm}
\end{tabular} & $\boldsymbol{3}$ &  $\boldsymbol{4}$ & $\boldsymbol{5}$  & $\boldsymbol{6}$ & $\boldsymbol{7}$  & $\boldsymbol{8}$\\ \hline \hline
%%%%%%%%%%%%%%%%%%%%%%%%%%%%%%
${}^{}$ \quad \begin{tabular}{c}\vspace{-0.35cm}\\
$\boldsymbol{E_r}$
\vspace{0.1cm}
\end{tabular}
 & $A_2\times A_1$ & $A_4$ & $D_5$   & $E_6$ & $E_7$  & $E_8$  \\ \hline 
${}^{}$ \quad 
\begin{tabular}{c}\vspace{-0.35cm}\\
$\boldsymbol{W_r=W(E_{r})}$
\vspace{0.1cm}
\end{tabular}
& 
$\mathfrak S_3\times \mathfrak S_2$
 & $\mathfrak S_5$ & $\big(\mathbf Z/2\mathbf Z)^4
 \ltimes  \mathfrak S_5  
$   & $W(E_6)$ & $W(E_7)$ & $W(E_8)$  \\ \hline 
%%%%%%%%%%%%%%%%%%%%%%%%%%%%%%
${}^{}$ \quad  
\begin{tabular}{c}\vspace{-0.35cm}\\
$\boldsymbol{\omega_r=\lvert W_r\lvert}$
\vspace{0.1cm}
\end{tabular}
& $12$ & $5!$ & $2^4\cdot
5!
$   & $2^7\cdot 3^4\cdot 5$ & $2^{10}\cdot 3^4 \cdot 5\cdot7$ & 
$2^{14}\cdot 3^5 \cdot 5^2\cdot7$
  \\ \hline 
%%%%%%%%%%%%%%%%%%%%%%%%%%%%%%
${}^{}$ \quad 
\begin{tabular}{c}\vspace{-0.35cm}\\
$\boldsymbol{l_r={\lvert \mathcal L_r \lvert}}$
%$\boldsymbol{l_r={\lvert \mathcal L_r \lvert}={\lvert W_r \lvert}\, \big/\, {\big\lvert  W_{r-1} \big\lvert}}$ ($r>3$)
\vspace{0.1cm}
\end{tabular}
&6 & 10 &  16  & 27 & 56 & 240  \\ \hline
%%%%%%%%%%%%%%%%%%%%%%%%%%%%%%
${}^{}$ \quad 
\begin{tabular}{c}\vspace{-0.35cm}\\
$\boldsymbol{\kappa_r={\lvert \mathcal K_r \lvert}}$
%$\boldsymbol{\kappa_r={\lvert \mathcal K_r \lvert}={\lvert W_r\lvert}\, \big/\, {\big\lvert W''_{r-1} \big\lvert} }$
\vspace{0.13cm}
\end{tabular}
& 3 & 5 &    10 & 27 & 126 & 2160 \\
\hline
%%%%%%%%%%%%%%%%%%%%%%%%%%%%%%
\end{tabular}}
\bk 
\caption{}
\end{table}

An  important ingredient in our approach is that  for each case $r\in \{3,\ldots,8\}$,  there are explicit descriptions of both sets $\cL_r$ and $\cK_r$ (when those are seen as subsets of $\Pic (X_r)$). We
mention only the case when $r = 8$ (from which the other cases can be easily deduced) and refer to \cite[\S26]{Manin} and \cite{Dolgachev} for details and proofs. 

When viewed as elements of ${\rm Pic}(X_r)$, any line or conic class is uniquely determined by the tuple of its integer coordinates  $(d,m_1,\ldots,m_r)\in \mathbf Z^{r+1}$ with respect to the basis $(h,-e_1,\ldots,-e_r)$ of the Picard lattice.  Let the 
{\it type} of a coordinate $(r+1)$-tuple $(d,m_1,\ldots,m_r)$ by a symbol $\big(d\,;\,  k_1^{n_1},\ldots, k_s^{n_s}\big)$  for some  integers $k_t\neq 0$ and $n_t>0$ for $t=1,\ldots,s\leq r$, with the defining property that among the non zero $m_1,\ldots,m_r$, exactly $n_t$ are equal to $k_t$, this for all $t$ ranging from 1 to $s$ (for example, the type of $(6,2,2,2,3,2,2,2,0)\in \mathbf Z^{9}$ is $\big(\,6\,; \,3,\ 2^6\, \big)$, etc).  

In the table below, we list all the types of lines and conics classes on $X_8$, and indicate the number of  classes there are for each type (see \cite[Prop.\,26.1]{Manin} and \cite[\S8.8]{Dolgachev}).

\begin{table}[ht]
\begin{center}
%%%%%%%%%%%%%%%%%%%%%%%%%%%%
\begin{tabular}{c} 
\vspace{-5.4cm}
\\
\begin{tabular}{|l|c|}
  \hline
    \multicolumn{2}{|c|}{
  \begin{tabular}{c}  
  \vspace{-0.35cm}\\
  {\bf Lines on $\boldsymbol{X_8}$}\vspace{0.06cm}
   \end{tabular} }
    \\
      \hline
      \begin{tabular}{c}  
  \vspace{-0.35cm}\\
  ${}^{}$ \hspace{0.07cm}   {\bf Types} 
  \vspace{0.06cm}
   \end{tabular} 
   & {\bf Number of such}\\
    \hline  \hline 
  \begin{tabular}{c}  
  \vspace{-0.36cm}\\
   ${}^{}$ \hspace{-0.15cm}   $\big(\,0\,;\, -1\,\big)$\vspace{0.04cm}
   \end{tabular} 
      & 8 \\
    \hline
     %%%%%%%%%%%%%%%
       \begin{tabular}{c}  
  \vspace{-0.36cm}\\
${}^{}$ \hspace{-0.15cm}  $\big(\,1\,;\, 1^2\,\big)$   \vspace{0.04cm}
   \end{tabular}     & 28 \\
    \hline
    %%%%%%%%%%%%%%%
           \begin{tabular}{c}  
  \vspace{-0.36cm}\\
${}^{}$ \hspace{-0.15cm}  $\big(\,2\,;\, 1^5\,\big)$   \vspace{0.04cm}
   \end{tabular}     & 56 \\
    \hline
    %%%%%%%%%%%%%%%
           \begin{tabular}{c}  
  \vspace{-0.36cm}\\
${}^{}$ \hspace{-0.15cm}  $\big(\,3\,;\, 2, 1^6\,\big)$   \vspace{0.04cm}
   \end{tabular}     &  56 \\
    \hline
    %%%%%%%%%%%%%%%
       \begin{tabular}{c}  
  \vspace{-0.36cm}\\
${}^{}$ \hspace{-0.15cm}  $\big(\,4\,;\, 2^3, 1^5\,\big)$   \vspace{0.04cm}
   \end{tabular}     & 56 \\
    \hline
    %%%%%%%%%%%%%%%
       \begin{tabular}{c}  
  \vspace{-0.36cm}\\
${}^{}$ \hspace{-0.15cm}  $\big(\,5\,;\, 2^6, 1^2\,\big)$   \vspace{0.04cm}
   \end{tabular}     & 28 \\
    \hline
    %%%%%%%%%%%%%%%
       \begin{tabular}{c}  
  \vspace{-0.36cm}\\
${}^{}$ \hspace{-0.15cm}  $\big(\,6\,;\, 3, 2^7\,\big)$   \vspace{0.04cm}
   \end{tabular}     &  8 \\
    \hline
    %%%%%%%%%%%%%%%
\end{tabular}
\end{tabular}
%%%%%%%%%%%%%%%%%%%%%%%%%%%%
\quad 
%%%%%%%%%%%%%%%%%%%%%%%%%%%%
\begin{tabular}{|l|c|}
  \hline
    \multicolumn{2}{|c|}{
  \begin{tabular}{c}  
  \vspace{-0.35cm}\\
  {\bf Conic classes on $\boldsymbol{X_8}$}\vspace{0.06cm}
   \end{tabular} }
    \\
      \hline
      \begin{tabular}{c}  
  \vspace{-0.35cm}\\
  ${}^{}$ \hspace{0.07cm}   {\bf Types} 
  \vspace{0.06cm}
   \end{tabular} 
   & {\bf Number of such}\\
    \hline  \hline 
       %%%%%%%%%%%%%%%
%  \begin{tabular}{c}  
%  \vspace{-0.36cm}\\
%   ${}^{}$ \hspace{0.25cm}   $-$\vspace{0.04cm}
%   \end{tabular} 
%      & 0 \\
%    \hline
     %%%%%%%%%%%%%%%
       \begin{tabular}{c}  
  \vspace{-0.36cm}\\
${}^{}$ \hspace{-0.15cm}  $\big(\,1\,;\, 1\,\big)$   \vspace{0.04cm}
   \end{tabular}     & 8 \\
    \hline
    %%%%%%%%%%%%%%%
           \begin{tabular}{c}  
  \vspace{-0.36cm}\\
${}^{}$ \hspace{-0.15cm}  $\big(\,2\,;\, 1^4\,\big)$   \vspace{0.04cm}
   \end{tabular}     & 70 \\
    \hline
    %%%%%%%%%%%%%%%
           \begin{tabular}{c}  
  \vspace{-0.36cm}\\
${}^{}$ \hspace{-0.15cm}  $\big(\,3\,;\, 2, 1^5\,\big)$   \vspace{0.04cm}
   \end{tabular}     &  168 \\
    \hline
    %%%%%%%%%%%%%%%
       \begin{tabular}{c}  
  \vspace{-0.36cm}\\
${}^{}$ \hspace{-0.15cm}  $\big(\,4\,;\, 2^3, 1^4\,\big)$   \vspace{0.04cm}
   \end{tabular}     & 280 \\
    \hline
    %%%%%%%%%%%%%%%
        \begin{tabular}{c}  
  \vspace{-0.36cm}\\
${}^{}$ \hspace{-0.15cm}  $\big(\,4\,;\, 3, 1^7\,\big)$   \vspace{0.04cm}
   \end{tabular}     & 8 \\
    \hline
    %%%%%%%%%%%%%%%
       \begin{tabular}{c}  
  \vspace{-0.36cm}\\
${}^{}$ \hspace{-0.15cm}  $\big(\,5\,;\, 2^6, 1\,\big)$   \vspace{0.04cm}
   \end{tabular}     & 56 \\
    \hline
    %%%%%%%%%%%%%%%
       \begin{tabular}{c}  
  \vspace{-0.36cm}\\
${}^{}$ \hspace{-0.15cm}  $\big(\,5\,;\, 3, 2^3, 1^4\,\big)$   \vspace{0.04cm}
   \end{tabular}     &  280 \\
    \hline
    %%%%%%%%%%%%%%%
        \begin{tabular}{c}  
  \vspace{-0.36cm}\\
${}^{}$ \hspace{-0.15cm}  $\big(\,6\,;\, 3^2, 2^4, 1^2\,\big)$   \vspace{0.04cm}
   \end{tabular}     &  420 \\
    \hline
    %%%%%%%%%%%%%%%
         \begin{tabular}{c}  
  \vspace{-0.36cm}\\
${}^{}$ \hspace{-0.15cm}  $\big(\,7\,;\, 3^4, 2^3, 1\,\big)$   \vspace{0.04cm}
   \end{tabular}     &  280 \\
    \hline
    %%%%%%%%%%%%%%%
            \begin{tabular}{c}  
  \vspace{-0.36cm}\\
${}^{}$ \hspace{-0.15cm}  $\big(\,7\,;\, 4, 3, 2^6\,\big)$   \vspace{0.04cm}
   \end{tabular}     &  56 \\
    \hline
    %%%%%%%%%%%%%%%
            \begin{tabular}{c}  
  \vspace{-0.36cm}\\
${}^{}$ \hspace{-0.15cm}  $\big(\,8\,;\, 3^7, 1\,\big)$   \vspace{0.04cm}
   \end{tabular}     &  8 \\
    \hline
    %%%%%%%%%%%%%%%
            \begin{tabular}{c}  
  \vspace{-0.36cm}\\
${}^{}$ \hspace{-0.15cm}  $\big(\,8\,;\, 4, 3^4, 2^3\,\big)$   \vspace{0.04cm}
   \end{tabular}     &  280 \\
    \hline
    %%%%%%%%%%%%%%%
            \begin{tabular}{c}  
  \vspace{-0.36cm}\\
${}^{}$ \hspace{-0.15cm}  $\big(\,9\,;\, 4^2, 3^5, 2 \,\big)$   \vspace{0.04cm}
   \end{tabular}     &  168 \\
    \hline
    %%%%%%%%%%%%%%%
             \begin{tabular}{c}  
  \vspace{-0.36cm}\\
${}^{}$ \hspace{-0.15cm}  $\big(\,10\,;\, 4^4, 3^4 \,\big)$   \vspace{0.04cm}
   \end{tabular}     &  70 \\
    \hline
    %%%%%%%%%%%%%%%
             \begin{tabular}{c}  
  \vspace{-0.36cm}\\
${}^{}$ \hspace{-0.15cm}  $\big(\,11\,;\, 4^7, 3 \,\big)$   \vspace{0.04cm}
   \end{tabular}     &  8 \\
    \hline
    %%%%%%%%%%%%%%%
\end{tabular}
\bk 
\end{center}
\caption{Types of lines and of conic classes and their numbers for the degree 1 del Pezzo surface $X_8$.}
\label{tab:Lines-and-Conics-on-X8}
\end{table}

\newpage

\newpage
%%%%%%%%%%%%%%%%%%%%%%%%%%%%%%%%%%%%%%%
%%%%%%%%%%%%%%%%%%%%%%%%%%%%%%%%%%%%%%%
\section{\bf The identity ${\bf HLog}^{r-2}$}
\label{S:Proof}
%%%%%%%%%%%%%%%%%%%%%%%%%%%%%%%%%%%%%%%
%%%%%%%%%%%%%%%%%%%%%%%%%%%%%%%%%%%%%%%
In the whole section,  we fix $3\leq r\leq 8$.  For most of the time, we will denote for simplicity %$X=X_r$, $\cL:=\cL_r$, $\cK:=\cK_r$. 
$$X=X_r\, ,\quad K=K_{X_r}\,, \quad \cL=\cL_r\, , \quad \cK=\cK_r\,, \quad {\rm etc.}$$

\subsection{}
For each conic class $\mathfrak c\in \cK$, we consider the corresponding conic fibration $\phi_\mc: X\ra \PP^1$ and we denote by $\Sigma_\mc\subset\PP^1$  the set of $r-1$ distinct points corresponding to the reducible fibers of $\phi_\mc$.
We may assume without loss of generality that $\Sigma_\mc=\{\sigma^1_\mc,\ldots, \sigma^{r-1}_\mc\}$ (with $\sigma^{r-1}_\mc=\infty$),  i.e., we are in the situation of a web of hypersurfaces 
(conics, in our case) as described in \S\ref{SS: webs}. We consider the same set-up and notations as in \S\ref{SS: webs}: 
we have
 %(where $\mc$ stands for an arbitrary conic class): 
%%%%%%%%%%%%%%%%%%%%%%%%%%%%%%%%%%%%%%%
\begin{itemize}
%%%%%%%%%%%%%%%%%%%%%%%%%%%%%%%%%%%%%%%
\item   $ Y=X\setminus L$ with  $L=L_r=\sum_{\ell\in\cL}\ell \subset X$, and 
$\HH={\bf H}^0\big(X,\Omega_X^1({\rm Log}\,L)\big)$; 
\sk
%%%%%%%%%%%%%%%%%%%%%%%%%%%%%%%%%%%%%%%
\item and for any conic class $\mc\in \cK$, we set: 
%\marginpar{\textcolor{red}{\bf (No more $\omega_{\mc}^i$)}}
\sk
%%%%%%%%%%%%%%%%%%%%%%%%%%%%%%%%%%%%%%%
\begin{itemize}
%%%%%%%%%%%%%%%%%%%%%%%%%%%%%%%%%%%%%%%
%%%%%%%%%%%%%%%%%%%%%%%%%%%%%%%%%%%%%%%
\item $
\HH_{\Sigma_\mc}={\bf H}^0\big(\PP^1,\Omega^1_{\PP^1}({\rm Log}\,\Sigma_\mc)\big)$ and $\HH_\mc=\phi_\mc^*\HH_{\Sigma_\mc}\subset \HH$;
\sk 
%\item $\omega^\mc_i={dz}/({z-\sigma_\mc^i})$ for $i=1,\ldots,r-2$;
%\sk
\item $\eta'_\mc=\wedge_{i=1}^{r-2} 
%{dz}/({z-\sigma_\mc^i})
\Big(\frac{dz}{z-\sigma_\mc^i}\Big)
\in\wedge^{r-2}\HH_{\Sigma_\mc}\subset\big(\HH_{\Sigma_\mc}\big)^{\otimes {r-2}}$; and 
%$\eta_\mc=\phi_\mc^*\eta'_\mc\in\wedge^{r-2}\HH_\mc\subset\HH_\mc^{\otimes {r-2}}$.
\sk 
\item  $\eta_\mc
=\wedge_{i=1}^{r-2} 
%{dz}/({z-\sigma_\mc^i})
\Big(\frac{d\phi_\mc}{\phi_\mc-\sigma_\mc^i}\Big)=\phi_\mc^*\eta'_\mc
\in\wedge^{r-2}\HH_\mc\subset\big(\HH_\mc\big)^{\otimes {r-2}}$.
%%%%%%%%%%%%%%%%%%%%%%%%%%%%%%%%%%%%%%%
\end{itemize}
%%%%%%%%%%%%%%%%%%%%%%%%%%%%%%%%%%%%%%%
\end{itemize}
%%%%%%%%%%%%%%%%%%%%%%%%%%%%%%%%%%%%%%%

Each of the elements $\eta_\mc$ generate the $1$-dimensional $\CC$-vector spaces $\wedge^{r-2}\HH_\mc$ and is canonically defined  up to  sign. 
In what follows, we  identify  $\wedge^{r-2}\HH_\mc$ with its image in $\wedge^{r-2}\HH$.

Using  the  notations of \S\ref{SS: webs}
 for any  $y\in Y$ and as holomophic germs at this point, 
%For any $y\in Y$, as holomophic germs at $y$ on $Y$ and using  Notation \ref{heavy notn}, 
one has 
%%%%%%%%%%%%%%%%%%%%%%%%%%%%%%%%%%%%%%%%%%
\begin{equation}
\label{Eq:AI-r-2}
%%%%%%%%%%%%%%%%%%%%%%%%%%%%%%%%%%%%%%%%%%
AI^{r-2}_{\mc}(\phi_c)=AI^{r-2}_{\Sigma_\mc,y}(\phi_\mc)={\rm II}_y(\eta_\mc)\in \cO_{Y,y}\, .
%%%%%%%%%%%%%%%%%%%%%%%%%%%%%%%%%%%%%%%%%%
\end{equation}
%%%%%%%%%%%%%%%%%%%%%%%%%%%%%%%%%%%%%%%%%%
 It then follows from Lemma \ref{Prop:Symbolic}, that Theorem \ref{Thm:MMain} 
 is equivalent to  the following statement:

%%%%%%%%%%%%%%%%%%%%%%%%%%%%%%%%%%%%%%%%%%
\begin{thm}
\label{Thm:main}
%%%%%%%%%%%%%%%%%%%%%%%%%%%%%%%%%%%%%%%%%%
{\rm 1.} Up to a global sign, there is a canonical choice of a tuple $(\tau_\mc)_{ \mc\in \cK}$ with  $\tau_\mc=\pm \eta_\mc$ for each  $\mc\in \cK$ and such that the following equality holds true in $\wedge^{r-2}\HH$: 
\begin{equation}
\label{Eq:Main-equation}
\sum_{\mc\in\cK_r} \tau_\mc=0
\end{equation}
{\rm 2.} Moreover, the identity \eqref{Eq:Main-equation} spans the space of linear relations between the  $\tau_\mc$'s, i.e., if $(c_\mc)_{\mc\in \cK}\in \mathbf C^{\cK}$ is such that $\sum_{\mc\in\cK_r}c_\mc \, \tau_\mc=0$ then all the $c_\mc$'s are equal. 
%%%%%%%%%%%%%%%%%%%%%%%%%%%%%%%%%%%%%%%%%%
\end{thm}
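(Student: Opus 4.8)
The plan is to prove both parts of Theorem~\ref{Thm:main} simultaneously by exploiting the $W_r$-action, following the representation-theoretic picture sketched after Theorem~\ref{Thm:MMain}. First I would set $V_r=\mathbf{C}^{\cL_r}$, the permutation module of $W_r$ on the set of lines, and observe that each $\eta_\mc$ lives naturally inside $\wedge^{r-2}V_r$: indeed, by property~(10) the conic class $\mc$ has exactly $r-1$ reducible fibers $\ell_1+\ell_1',\dots,\ell_{r-1}+\ell_{r-1}'$, and among these $r-1$ pairs of lines, any $r-2$ of them determine $\mc$; concretely, the $1$-form $\Theta_\mc^{i}=d\phi_\mc/(\phi_\mc-\sigma_\mc^i)$ has its logarithmic divisor supported on $\ell_i+\ell_i'$, so up to a sign $\Theta_\mc^i$ is determined by the pair $\{\ell_i,\ell_i'\}$, and $\eta_\mc=\Theta_\mc^1\wedge\cdots\wedge\Theta_\mc^{r-2}$ becomes, after the identification $\HH\hookrightarrow\mathbf{C}^{\cL_r}$ by residues along lines, a fixed wedge of $r-2$ vectors each attached to a reducible-fiber pair. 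The key point is that the collection $(\pm\eta_\mc)_{\mc\in\cK_r}$ is a single $W_r$-orbit (property~(9)), so $T_r:=\sum_{\mc}\tau_\mc$, for a $W_r$-equivariant choice of signs, spans a copy of the sign representation $\mathbf{sign}_r$ inside $\wedge^{r-2}V_r$ (or is zero).

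Next I would pin down the signs. Fix $\mc_1=h-e_1$; its stabilizer is $W_{\mc_1}\cong W(D_{r-1})$ (property~(12)). I would choose $\tau_{\mc_1}=\pm\eta_{\mc_1}$ and check that $W_{\mc_1}$ acts on $\eta_{\mc_1}$ (equivalently on $\wedge^{r-2}\HH_{\mc_1}$) through a character $\chi$; then for an arbitrary $\mc=w\cdot\mc_1$ I set $\tau_\mc=(-1)^w\,w\cdot\tau_{\mc_1}$, which is well-defined precisely when $\chi$ agrees with the restriction of $\mathbf{sign}_r$ to $W_{\mc_1}$. Verifying $\chi=\mathbf{sign}_r|_{W_{\mc_1}}$ is a finite check: $W_{\mc_1}$ is generated by $s_2,\dots,s_r$, and one computes how each such reflection permutes-with-sign the $r-1$ reducible fibers of $\mc_1$ and hence acts on the wedge $\eta_{\mc_1}$. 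Granting this, $T_r=\sum_\mc\tau_\mc$ is $W_r$-equivariantly a sum over the orbit with the sign-twist, so $T_r$ transforms under $\mathbf{sign}_r$; hence $T_r\in(\wedge^{r-2}V_r)^{\mathbf{sign}_r}$, the $\mathbf{sign}_r$-isotypic part.

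Part~1 then reduces to: $\mathbf{sign}_r$ does \emph{not} occur in $\wedge^{r-2}\mathbf{C}^{\cL_r}$ (so the isotypic component is $0$, forcing $T_r=0$). I would establish this by computing the inner product $\langle\chi_{\wedge^{r-2}V_r},\,\mathbf{sign}_r\rangle$ via the standard formula $\chi_{\wedge^{k}V}(w)=\text{(coefficient extraction from }\prod(1+t\,\lambda_i)\text{)}$ expressed through the cycle type of $w$ on $\cL_r$; equivalently, $\sum_{w\in W_r}(-1)^w\chi_{\wedge^{r-2}V_r}(w)=0$, which is a finite character computation carried out case by case for $r=3,\dots,8$ using the explicit line sets (Tables referenced in the excerpt) and the known conjugacy-class data of $W(E_r)$ — this can be delegated to a short computation, e.g.\ in GAP. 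For Part~2, I would argue that the span of the relations among the $\tau_\mc$ is a $W_r$-submodule of $\mathbf{C}^{\cK_r}$ (the $\tau_\mc$'s are permuted-with-sign by $W_r$, with $\cK_r$ a single orbit under the $W_r$-action on conic classes), hence corresponds to a subrepresentation of $\mathrm{Ind}_{W_{\mc_1}}^{W_r}\chi$; since the image map $\mathbf{C}^{\cK_r}\to\wedge^{r-2}\HH$, $e_\mc\mapsto\tau_\mc$, has image equal to the span of the $\tau_\mc$'s and kernel the relation module, I would show the kernel is exactly the trivial line: one checks $\mathrm{Ind}_{W_{\mc_1}}^{W_r}\chi$ contains the trivial representation with multiplicity one (Frobenius reciprocity: $\langle\mathrm{Ind}\,\chi,\mathbf{1}\rangle=\langle\chi,\mathbf{1}|_{W_{\mc_1}}\rangle$, which is $1$ iff $\chi$ is trivial on $W_{\mc_1}$ — handled in the sign-check above), and that the trivial line $\sum_\mc e_\mc$ does map to $T_r=0$, while no larger subrepresentation can be in the kernel since the remaining constituents of $\mathrm{Ind}\,\chi$ must survive (this is where one uses that the map is nonzero and a dimension/multiplicity bookkeeping, again finite per $r$).

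The main obstacle I expect is the sign bookkeeping: getting a genuinely $W_r$-equivariant, unambiguous assignment $\mc\mapsto\tau_\mc=\pm\eta_\mc$ requires knowing the precise character $\chi$ by which $W_{\mc_1}$ acts on $\wedge^{r-2}\HH_{\mc_1}$, and more subtly, that the sign $(-1)^w$ compensates exactly the sign appearing when $w$ permutes the $r-2$ factors of $\eta_{\mc_1}$ versus the $r-1$ reducible fibers — an off-by-one between $r-1$ fibers and $r-2$ wedge factors that must be tracked carefully and uniformly in $r$. Once the signs are correct, the vanishing $T_r=0$ and the rank-one relation statement both follow from the same representation-theoretic input ($\mathbf{sign}_r\not\hookrightarrow\wedge^{r-2}\mathbf{C}^{\cL_r}$ and $\mathbf{1}$ appears once in $\mathrm{Ind}\,\chi$), reduced to finite character computations over $W(E_r)$ for $r\le 8$.
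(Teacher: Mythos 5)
Your sign set-up coincides with the paper's: fixing $\tau_{\mc_1}$, checking that $W_{\mc_1}$ acts on $\wedge^{r-2}\HH_{\mc_1}$ by the restriction of the signature (Lemma~\ref{Lem:Lolo}), defining $\tau_{w\cdot\mc_1}=(-1)^w\,w\cdot\tau_{\mc_1}$, and deducing that $T_r=\sum_\mc\tau_\mc$ transforms by ${\bf sign}$ (Lemma~\ref{Lem:hlog-signature}). The gap is in the step meant to force $T_r=0$: the claim that ${\bf sign}$ does not occur in $\wedge^{r-2}\CC^{\cL_r}$ is \emph{false} for $r=8$. As stated in \S\ref{SS:representation-theoretic-interpretation}, the multiplicity of the signature in $\wedge^{r-2}\CC^{\cL_r}$ vanishes exactly for $r\leq 7$, so your character computation would return a positive number for $r=8$ and the isotypic-component argument would prove nothing in that case; even the natural repair --- restricting to the $W$-submodule $\wedge^{r-2}_{exc}\CC^{\cL}$ spanned by wedges of pairwise disjoint lines, which does contain all the $\tau_\mc$ --- is left open for $r=8$ in the paper. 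The paper's actual proof is combinatorial and uniform in $r$: by Lemma~\ref{Claim:E-exceptional-(r-2)-tuple} each exceptional wedge $\ell_1\wedge\cdots\wedge\ell_{r-2}$ receives contributions from exactly two conic classes; an explicit computation of the coefficient of $e_3\wedge\cdots\wedge e_r$ shows the two contributions (from $h-e_1$ and $h-e_2$) are $(-1)^r$ and $(-1)^{r-1}$ and cancel; transitivity of $W$ on exceptional tuples plus the signature equivariance then kills every coefficient.

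Your plan for part~2 also needs repair. Since $\chi={\bf sign}\lvert_{W_{\mc_1}}$ is nontrivial, Frobenius reciprocity gives $\langle\mathrm{Ind}_{W_{\mc_1}}^{W}\chi,\mathbf 1\rangle=0$, not $1$: in the sign-twisted permutation module the vector $\sum_\mc e_\mc$ spans a copy of ${\bf sign}$, not of the trivial representation, and the multiplicity you actually need is $\langle\mathrm{Ind}\,\chi,{\bf sign}\rangle=1$. More seriously, knowing that ${\bf sign}$ occurs once in the induced module only controls the ${\bf sign}$-isotypic part of the kernel of $e_\mc\mapsto\tau_\mc$; to get a one-dimensional kernel you would still have to show that every other irreducible constituent maps injectively into $\wedge^{r-2}\HH$, which does not follow formally from what you have set up. The paper instead notes that each exceptional basis vector occurs in exactly two $\tau_\mc$'s with opposite coefficients, so a relation $\sum c_\mc\tau_\mc=0$ forces the coefficients of ``adjacent'' conic classes to agree, and the resulting graph on $\cK$ is connected because $W$ acts transitively on $\cK$ and each generator $s_i$ either stabilizes $\mc_1$ or sends it to $h-e_2$.
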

%%%%%%%%%%%%%%%%%%%%%%%%%%%%%%%%%%%%%%%%%%

The rest of this section is devoted to proving this result.

%%%%%%%%%%%%%%%%%%%%%%%%%%%%%%%%%%%%%%%%%%
\subsection{} 
\label{SS:SS-32}
%%%%%%%%%%%%%%%%%%%%%%%%%%%%%%%%%%%%%%%%%%
The irreducible components of  $L$ being 
the lines $\ell \in\cL$, one can define a Poincar\'e residue map 
${\rm Res}_L=\oplus_{\ell\in\cL}{\rm Res}_\ell: \Omega_X^1\big({\rm Log}\,L\big)\ra\oplus_{\ell \in\cL}\cO_\ell$ 
wich makes   the following
 sequence of sheaves exact: 
 %%%%%%%%%%%%%%%%%%%%%%%%%%%%%%%%%%%%%%%%%%
$$0\ra\Omega_X^1\longmapsto\Omega_X^1\big({\rm log} \,L\big)\longmapsto\oplus_{\ell\in\cL}\cO_\ell\ra0.$$
%%%%%%%%%%%%%%%%%%%%%%%%%%%%%%%%%%%%%%%%%%
As $X$ is a rational variety, we have that ${\bf H}^0(X,\Omega_X^1)=0$, hence the residue map induces an injective map of $\CC$-linear vector spaces
${\rm Res}_L: \HH\hra\CC^{\cL}$ and in turn an injective linear map $\wedge^{r-2}\HH\hra\wedge^{r-2}\CC^{\cL}$.
%%%%%%%%%%%%%%%%%%%%%%%%%%%%%%%%%%%%%%%%%%
\sk

Given a conic fibration $\phi_\mc: X\ra\PP^1$ associated to a conic class $\mc\in\cK$,  we denote by $C_{ \mc}^1,\ldots, C_{ \mc}^{r-1}$ the reducible fibers of $\phi_\mc$, with $C_{ \mc}^i=\phi_\mc^{-1}(\sigma^i_{\mc})$ for $i=1,\ldots,r-1$ (with $\sigma^{r-1}_\mc=\infty$).
Each conic $C_{ \mc}^i$ is a union of two lines $\ell_{ \mc}^i$, $\tilde \ell_{ \mc}^i$ intersecting in one point. 
It follows that the residues
%%%%%%%%%%%%%%%%%%%%%%%%%%%%%%%%%%%%%%%%%%
$${\rm Res}_L
\left(  {d  \phi_\mc}/\Big({ \phi_\mc-\sigma_{\mc}^i \Big)}
\right)
=  
C_{ \mc}^i-C_{ \mc}^{r-1}=\Big(\, \ell_{ \mc}^i+\tilde \ell_{ \mc}^i\, \Big)-\Big(
\, \ell_{ \mc}^{r-1}+\tilde \ell_{ \mc}^{r-1}\, \Big)\in \CC^\cL$$
%%%%%%%%%%%%%%%%%%%%%%%%%%%%%%%%%%%%%%%%%%
for $ i=1,\ldots, r-2$, form a basis for the image of 
$\HH_\mc\subset\HH$ under the injective map $\HH\hra\CC^{\cL}$. Consequently,  we get that the image of 
$\eta_\mc\in\wedge^{r-2}\HH_\mc\subset\wedge^{r-2}\HH$ under the injective map 
$\wedge^{r-2}\HH\hra\wedge^{r-2}\CC^{\cL}$ is 
%%%%%%%%%%%%%%%%%%%%%%%%%%%%%%%%%%%%%%%%%%
\begin{equation}
\label{Eq:(r-2)-Wedge-Cci}
\Big(C_{ \mc}^1-C_{ \mc}^{r-1}\Big)\wedge\ldots\wedge\Big (C_{ \mc}^{r-2}-C_{ \mc}^{r-1}\Big).\end{equation}
%%%%%%%%%%%%%%%%%%%%%%%%%%%%%%%%%%%%%%%%%%

%%%%%%%%%%%%%%%%%%%%%%%%%%%%%%%%%%%%%%%%%%
%%%%%%%%%%%%%%%%%%%%%%%%%%%%%%%%%%%%%%%%%%
\subsection{}
\label{SS:SS-33}
%%%%%%%%%%%%%%%%%%%%%%%%%%%%%%%%%%%%%%%%%%
%%%%%%%%%%%%%%%%%%%%%%%%%%%%%%%%%%%%%%%%%%
The Weyl group $W$ acts on the set of lines $\cL$ and on the set of conic classes $\cK$ 
in a compatible way. In particular, for $\mc \in \cK$ given, the action of any $w\in W$ 
 sends  the reducible fibers of $\phi_\mc$ to the reducible fibers of $\phi_{w\cdot\mc}$ in the following way: 
$$w\cdot C_\mc^{\,i} =w\cdot \ell_{\mc}^{\,i}+w\cdot \tilde \ell_{\mc}^{\,i}\,. $$

On the other hand, the action of $W$ on $\cL$ induces a canonical linear action of $W$ on $\CC^\cL$, and therefore on  $\wedge^{r-2}\CC^\cL$.  The action of  $w\in W$ on 
any wedge product $\wedge_{i=1}^{r-2} \ell_i
%\ell_1\wedge \cdots \wedge \ell_{r-2} 
$
 with $(\ell_i)_{i=1}^{r-2}\in \cL^{r-2}$ is given by 
\begin{equation*}
w\cdot \Big(
\ell_1\wedge \cdots \wedge \ell_{r-2} \Big)= 
\big(w\cdot \ell_1\big) \wedge \cdots \wedge\big(w\cdot \ell_{r-2}\big) \, . 
\end{equation*}

%%%%%%%%%%%%%%%%%%%%%%%%%%%%%%%%%%%%%%%%%%
%%%%%%%%%%%%%%%%%%%%%%%%%%%%%%%%%%%%%%%%%%
\subsection{}
%%%%%%%%%%%%%%%%%%%%%%%%%%%%%%%%%%%%%%%%%%
%%%%%%%%%%%%%%%%%%%%%%%%%%%%%%%%%%%%%%%%%%
We now fix a base conic class $\mc_1=h-e_1$ and label the reducible fibers of the associated conic fibration
$\phi_{\mc_1}: X\ra\PP^1$ by 
$${}^{}\hspace{3cm} C^{\,i}=C_{\mc_1}^{\,i-1}=l_{1i}+e_i\qquad \mbox{ for }\quad  i=2,\ldots, r\, ,$$ where 
$l_{1i}$ stands fo the class of the strict transform under the blow-up map $\beta$ of the line in $\PP^2$ through $p_1$ and $p_i$, i.e.,  $l_{1i}=h-e_1-e_i$.
As a generator of (the image in $\wedge^{r-2}\CC^{\cL}$ of) $ \wedge^{r-2}\HH _{\mc_1}$, 
we choose and fix 
$$ \tau_{\mc_1}=\Big(C^2-C^{r}\Big)\wedge \Big(C^3-C^{r}\Big)
\wedge\ldots\wedge\Big(C^{r-1}-C^{r}\Big)\in 
%\wedge^{r-2}\HH _{\mc_1}\subset
\wedge^{r-2}\CC^{\cL}.$$

For $w\in W$  arbitrary,  we have:
\begin{equation}
\label{Eq:bullet}
%%%%%%%%%%%%%%%%%%%%%%%%%%%%%%%%%%
w\cdot  \tau_{\mc_1}= 
\big(w\cdot C^2-w\cdot C^{r}\big)\wedge\big(w\cdot C^3-w\cdot C^{r}\big)\wedge\ldots\wedge \big(w\cdot C^{r-1}-w\cdot C^{r}\big)\in 
%\wedge^{r-2}M_{w\cdot \mc_1}\subset
\wedge^{r-2}\CC^{\cL}. 
\end{equation}

The stabilizer $W_{\mc_1}$ of $\mc_1$ is a subgroup  of $W$ hence naturally acts on
 $\wedge^{r-2}\CC^{\cL}$. This action lets 
 $\wedge^{r-2} \HH_{\mc_1}\subset \wedge^{r-2}\CC^{\cL}$ invariant hence  
 $\wedge^{r-2} \HH_{\mc_1}$ is naturally a $W_{\mc_1}$-representation (of dimension 1).

%%%%%%%%%%%%%%%%%%%%%%%%%%%%%%%%%%
\begin{lem} 
\label{Lem:Lolo}
%%%%%%%%%%%%%%%%%%%%%%%%%%%%%%%%%%
{\rm 1.} As a $W_{\mc_1}$-representation, $\wedge^{r-2} \HH_{\mc_1}$ is isomorphic 
to the signature representation.  
%{\it i.e}\,for any $w\in W_{\mc_1}$, one has  $w\cdot \tau_{\mc_1}=(-1)^w \tau_{\mc_1}$. \sk

\noindent {\rm 2.} For $w\in W$ and $\mc\in\cK$ such that $\mc=w\cdot\mc_1$, the element 
$$\tau_{\mc}=(-1)^w\big (w \cdot  \tau_{\mc_1}\big)$$
is a well defined generator of  $\wedge^{r-2} \HH_{\mc}\subset \wedge^{r-2}\CC^{\cL}$.
%%%%%%%%%%%%%%%%%%%%%%%%%%%%%%%%%%
\end{lem}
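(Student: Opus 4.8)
The plan is to prove the two assertions of Lemma \ref{Lem:Lolo} in turn, the first being the real content and the second a quick formal consequence.

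\textbf{Part 1.} I want to identify the one-dimensional $W_{\mc_1}$-representation $\wedge^{r-2}\HH_{\mc_1}$. Since $W_{\mc_1}$ is a Weyl group (of type $D_{r-1}$ by item (12)), it has exactly two one-dimensional representations, the trivial one and the signature; so it suffices to exhibit a single element $s\in W_{\mc_1}$ acting by $-1$ on the line $\wedge^{r-2}\HH_{\mc_1}$. Recall from \S\ref{SS:SS-32} that a generator of $\wedge^{r-2}\HH_{\mc_1}$ is $\tau_{\mc_1}=(C^2-C^r)\wedge\cdots\wedge(C^{r-1}-C^r)$, where $C^i=l_{1i}+e_i$ with $l_{1i}=h-e_1-e_i$, and that $W_{\mc_1}=\langle s_2,\ldots,s_r\rangle$. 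The generators $s_2,\ldots,s_{r-1}$ act on $\{e_2,\ldots,e_r\}$ (equivalently on $\{C^2,\ldots,C^r\}$) by the adjacent transpositions $(i\ i{+}1)$, since $s_i$ swaps $e_i,e_{i+1}$ and fixes $h,e_1$, hence swaps $C^i,C^{i+1}$ and fixes the other $C^j$. Thus under $\langle s_2,\ldots,s_{r-1}\rangle\cong\mathfrak S_{r-1}$ the tuple $(C^2,\ldots,C^r)$ is permuted in the standard way, and the wedge $\wedge_{i=2}^{r-1}(C^i-C^r)$ transforms by the sign character of $\mathfrak S_{r-1}$ — this is the classical computation that the top exterior power of the standard ($(r-2)$-dimensional) permutation representation is the sign representation, which I will carry out explicitly by checking that a transposition negates the wedge. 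In particular $s_2$ acts by $-1$ on $\wedge^{r-2}\HH_{\mc_1}$, which already forces the representation to be nontrivial, hence the signature. The remaining generator $s_r$ must then also act compatibly; but I do not even need to compute its action, since having found one generator acting by $-1$ settles which of the two characters we have.

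\textbf{Part 2.} Given $w\in W$ with $\mc=w\cdot\mc_1$, I must show $\tau_\mc:=(-1)^w(w\cdot\tau_{\mc_1})$ is well defined, i.e. independent of the choice of $w$, and that it generates $\wedge^{r-2}\HH_\mc$. For well-definedness: if also $\mc=w'\cdot\mc_1$ then $w^{-1}w'\in W_{\mc_1}$, so write $w'=wg$ with $g\in W_{\mc_1}$; then $(-1)^{w'}(w'\cdot\tau_{\mc_1})=(-1)^w(-1)^g\,w\cdot(g\cdot\tau_{\mc_1})=(-1)^w(-1)^g\,w\cdot((-1)^g\tau_{\mc_1})=(-1)^w(w\cdot\tau_{\mc_1})$, using Part 1 (that $g$ acts on $\tau_{\mc_1}$ by its signature $(-1)^g$) and that signature is a homomorphism (item (8)). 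For the generation claim: $W$ acts transitively on $\cK$ (item (9)), the action of $W$ on $\wedge^{r-2}\CC^{\cL}$ sends $\HH$ into $\HH$ (it permutes lines and the residue description is $W$-equivariant, as noted in \S\ref{SS:SS-33}), and $w$ carries the reducible fibers $C^i$ of $\phi_{\mc_1}$ to those of $\phi_\mc$; hence $w\cdot\tau_{\mc_1}$ is, up to sign, exactly the generator of $\wedge^{r-2}\HH_\mc$ given by formula \eqref{Eq:(r-2)-Wedge-Cci}, so $\tau_\mc=\pm w\cdot\tau_{\mc_1}$ generates $\wedge^{r-2}\HH_\mc$.

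\textbf{Main obstacle.} The only genuine point is Part 1 — pinning down the character — and within it the only thing to be careful about is the action of the ``Cremona'' generator $s_r$, which does not simply permute $e_2,\ldots,e_r$; but as explained above the argument is arranged so that exhibiting $s_2\mapsto-1$ suffices and $s_r$ need not be analyzed. So the proof is essentially the elementary fact that $\wedge^{\mathrm{top}}$ of a standard permutation representation is the sign representation, transported through the identification of $\HH_{\mc_1}$ with that representation via the $C^i$.
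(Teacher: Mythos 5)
Your Part 2 is fine and matches the paper's argument. The problem is in Part 1, specifically in your decision not to analyze the Cremona generator $s_r$. Your justification rests on the claim that $W_{\mc_1}$, ``being a Weyl group,'' has exactly two one-dimensional representations. That is false as a general statement about Weyl groups (type $B_n$ has four, as does any product of $A_1$'s), and it is false in the case at hand when $r=3$: there $W_{\mc_1}=\langle s_2,s_3\rangle$ is of type $D_2=A_1\times A_1$ (the paper's own convention in \S\ref{SS:dPd}.(12)), i.e.\ isomorphic to $(\ZZ/2)^2$, which has four characters, and $s_2$, $s_3$ are not conjugate in it. Knowing that $s_2$ acts by $-1$ therefore does not determine how $s_3=s_r$ acts, and the representation could a priori be the character sending $s_2\mapsto -1$, $s_r\mapsto +1$, which is not the signature. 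Since the lemma is stated for all $3\leq r\leq 8$, this is a genuine gap, not a cosmetic one. (For $r\geq 4$ your shortcut can be repaired by a different argument: $D_{r-1}$ is then connected and simply-laced, so all simple reflections of $W_{\mc_1}$ are conjugate and every one-dimensional character takes the same value on all of them. But that is not the reasoning you gave, and it still fails for $r=3$.)

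The paper avoids this by doing the one thing you declined to do: it checks directly that \emph{each} generator $s_2,\ldots,s_r$ of $W_{\mc_1}$ sends $\tau_{\mc_1}$ to $-\tau_{\mc_1}$, using \eqref{Eq:s-alpha-i} and \eqref{Eq:bullet}; once all generators act by $-1$, the representation is the signature with no need for any dichotomy. The missing computation for $s_r$ is short and you should include it: $s_r$ fixes $\mc_1=h-e_1$, sends $l_{12}=h-e_1-e_2\mapsto e_3$ and $e_2\mapsto h-e_1-e_3=l_{13}$ (and vice versa), and fixes $l_{1i}$ and $e_i$ for $i\geq 4$; hence in $\CC^{\cL}$ it transposes $C^2$ and $C^3$ and fixes $C^4,\ldots,C^r$, so $s_r\cdot\tau_{\mc_1}=-\tau_{\mc_1}$. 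With that added, your proof agrees with the paper's.
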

%%%%%%%%%%%%%%%%%%%%%%%%%%%%%%%%%%

 With the notation of this lemma, since the $w\cdot C^i$'s for $i=2,\ldots,r-1$ are the non irreducible fibers of $\phi_\mc$, one clearly has that $\tau_{\mc}$ coincides with $\eta_\mc$ up to sign, hence, in particular, is a generator of $\wedge^{r-2} \HH_{\mc}$. The interest of the second statement  in this lemma is that it asserts that $\tau_{\mc}$ only depends on $\mc$ and not on $w$ (once $\tau_{\mc_1}$ has been fixed). 

%%%%%%%%%%%%%%%%%%%%%%%%%%%%%%%%%%
\begin{proof}
%%%%%%%%%%%%%%%%%%%%%%%%%%%%%%%%%%
Proving 1.\,is elementary. Indeed,  being a $W_{\mc_1}$-representation of dimension 1, there are only two possibilities for $\wedge^{r-2} \HH_{\mc_1}$: either it is the trivial 
$W_{\mc_1}$-representation, or it is the signature representation. To prove that the second case does occur, it suffices to exhibit an element  $w\in W_{\mc_1}$ such that $w\cdot \tau_{\mc_1}=-\tau_{\mc_1}$.
 Using  \eqref{Eq:s-alpha-i} and \eqref{Eq:bullet}, 
 it is straightforward to check 
  that any of the generators $ s_2,\ldots,s_r$ of $W_{\mc_1}$ ({\it cf.}\,\S\ref{SS:dPd}.(12)) has this property.

The second part of the lemma follows easily from the first: for $w_1,w_2\in W$ such that $w_1\cdot \mc_1=w_2\cdot \mc_1=\mc$, one has $w_2^{-1}w_1\in W_{\mc_1}$,  hence 
$w_2^{-1}w_1\cdot \tau_{\mc_1}=(-1)^{w^{-1}_2w_1} \tau_{\mc_1}$ by {\rm 1.} Thus 
$w_2\cdot \big(w_2^{-1}w_1\cdot \tau_{\mc_1}\big)=(-1)^{w^{-1}_2w_1}\,
w_2\cdot \tau_{\mc_1}$. The signature $ w\mapsto (-1)^w$ being a group morphism, one has  $(-1)^{w^{-1}}=(-1)^w$ for any $w$, therefore 
one obtains that $w_1\cdot \tau_{\mc_1}=(-1)^{w_2} (-1)^{w_1}\,
w_2\cdot \tau_{\mc_1}$. This is equivalent to  $(-1)^{w_1} w_1\cdot \tau_{\mc_1}=
(-1)^{w_2} \,
w_2\cdot \tau_{\mc_1}$, which is the relation ensuring that 2.\,holds true.
%%%%%%%%%%%%%%%%%%%%%%%%%%%%%%%%%%
\end{proof}
%%%%%%%%%%%%%%%%%%%%%%%%%%%%%%%%%%

%%%%%%%%%%%%%%%%%%%%%%%%%%%%%%%%%%
%%%%%%%%%%%%%%%%%%%%%%%%%%%%%%%%%%
\subsection{Proof of Theorem \ref{Thm:main}} 
\label{SS:zogo}
%%%%%%%%%%%%%%%%%%%%%%%%%%%%%%%%%%
%%%%%%%%%%%%%%%%%%%%%%%%%%%%%%%%%%
We are going to prove that the following statements are satisfied: 
%%%%%%%%%%%%%%%%%%%%%%%%%%%%%%%%%%
\begin{enumerate}
%%%%%%%%%%%%%%%%%%%%%%%%%%%%%%%%%%
%%%%%%%%%%%%%%%%%%%%%%%%%%%%%%%%%%
\item[{\rm 1.}] {\it The $\cK$-tuple $\big(\tau_\mc\big)_{ \mc\in \cK}$ is a basis of $\oplus_{ \mc\in \cK}
\wedge ^{r-2}\HH_{\mc}$ which is canonical, up to a global sign.}\sk
%%%%%%%%%%%%%%%%%%%%%%%%%%%%%%%%%%
\item[{\rm 2.}]  
{\it  The sum $\sum_{ \mc\in \cK} \tau_\mc$ in $\wedge^{r-2} \HH$
%is $W$-invariant and 
transforms as the signature under the action of $W$.}\sk 
%%%%%%%%%%%%%%%%%%%%%%%%%%%%%%%%%%
\item[{\rm 3.}] {\it One has $\sum_{ \mc\in \cK} \tau_\mc=0 $ in $\wedge^{r-2} \HH$.}
\sk
%%%%%%%%%%%%%%%%%%%%%%%%%%%%%%%%%%
\item[{\rm 4.}] {\it Any scalar linear relation between the $\tau_\mc $'s in $\wedge^{r-2} \HH$ is a multiple of the one corresponding to the identity of 3.}
\sk
%%%%%%%%%%%%%%%%%%%%%%%%%%%%%%%%%%
%%%%%%%%%%%%%%%%%%%%%%%%%%%%%%%%%%
\end{enumerate}
%%%%%%%%%%%%%%%%%%%%%%%%%%%%%%%%%%
%%%%%%%%%%%%%%%%%%%%%%%%%%%%%%%%%%

The first assertion follows easily from the second part of Lemma \ref{Lem:Lolo} and from the fact that each $\tau_\mc$ necessarily coincides with $\eta_\mc$ up to sign (details are left to the reader). In  this subsection, we are going to establish  first 2.\,({\it cf.} Lemma \ref{Lem:hlog-signature}) then 3.\,and 4.\,which will follow in the same time from Lemma \ref{Lem:gno-gno}.
\sk 

 For $k\geq 1$, following Manin ({\it cf.}\,\cite[\S26]{Manin}),  we call an {\it exceptional $k$-tuple} any $k$-tuple $(\ell_i)_{i=1}^k\in \cL^k$ of non intersecting lines, i.e., such that $\ell_i\cdot \ell_j=0$ for any $i,j$ such that $1\leq i<j\leq k$. Our approach to prove both 2.\,and 3.\,together is elementary and relies on the following 

%%%%%%%%%%%%%%%%%%%%%%%%%%%%%%%%%%%%%%%
\begin{lem}
\label{Claim:E-exceptional-(r-2)-tuple}
%%%%%%%%%%%%%%%%%%%%%%%%%%%%%%%%%%%%%%%
%{\bf Claim.}  
{\it Let $\mathcal E$ be an exceptional $(r-2)$-tuple. 
There are exactly two conic classes such that each element of $\mathcal E$ appears as a component of a reducible fiber of the associated conic fibration.
% $\phi_{\mathfrak c_{\mathcal E}}$ and $\phi_{\mathfrak c'_{\mathcal E}}$. 
}
%%%%%%%%%%%%%%%%%%%%%%%%%%%%%%%%%%%%%%%
\end{lem}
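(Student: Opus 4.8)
Fix an exceptional $(r-2)$-tuple $\mathcal E = (\ell_1,\ldots,\ell_{r-2})$. By property \S\ref{SS:dPd}.(13), there is $w\in W$ with $w\cdot e_i = \ell_i$ for $i=1,\ldots,r-2$, so after applying $w^{-1}$ it suffices to treat the case $\mathcal E = (e_1,\ldots,e_{r-2})$. I would then ask: for which conic classes $\mc$ does each $e_i$ ($i\le r-2$) occur as a component of a reducible fiber of $\phi_\mc$? By \S\ref{SS:dPd}.(10), a reducible fiber of $\phi_\mc$ is a pair of lines $\ell,\ell'$ with $\ell\cdot\ell'=1$ and $\ell+\ell' = \mc$ in $\Pic(X)$; so $e_i$ is a component of a reducible fiber of $\phi_\mc$ iff $\mc - e_i$ is the class of a line meeting $e_i$ transversely, equivalently iff $\mc\cdot e_i = 1$ and $(\mc-e_i)$ is an effective $(-1)$-class (this last being automatic once $\mc\cdot e_i=1$, since $(\mc-e_i)^2 = \mc^2 - 2\mc\cdot e_i + e_i^2 = 0 - 2 - 1 = -3$... wait — let me instead characterize it as: $\mc\cdot e_i=1$ and $\mc-e_i\in\cL$). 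The key reduction is thus: I must count conic classes $\mc$ with $\mc\cdot e_i = 1$ for all $i=1,\ldots,r-2$ \emph{and} such that $\mc - e_i$ is a line for each such $i$.

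**Carrying it out.** Write $\mc = (d; m_1,\ldots,m_r)$ in the basis $(h,-e_1,\ldots,-e_r)$, so $\mc = dh - \sum m_j e_j$; the conditions $\mc^2 = 0$, $\mc\cdot K = -2$ give $\sum m_j^2 = d^2 - 1$ and $\sum m_j = 3d-2$. The requirement $\mc\cdot e_i = 1$ means $m_i = -1$ for $i=1,\ldots,r-2$. (I'd double-check the sign convention: with $e_i\cdot e_i = -1$ and $\mc = dh - \sum m_j e_j$, one gets $\mc\cdot e_i = m_i$ — so actually $m_i = 1$; I will pin down the convention from the explicit coordinate tables and proceed accordingly.) Substituting the $r-2$ fixed values leaves only $d$, $m_{r-1}$, $m_r$ free, subject to two quadratic/linear equations — a finite, very small system. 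I expect exactly the two "obvious" solutions: $\mc_1 = h - e_{r-1}$ type and its partner under the conic-fibration involution, or more symmetrically $\mc = h - e_j$ for a suitable index versus a Cremona-transformed class; in any case solving the two equations in three unknowns (with the integrality and the line-class constraints $\mc - e_i\in\cL$ for $i\le r-2$) yields precisely two classes. I would verify that both candidates genuinely have each $e_i$ ($i\le r-2$) as a fiber component — i.e. that $\mc - e_i$ lands in $\cL_r$ — using the explicit description of $\cL_r$ recalled before Theorem~\ref{tab:Lines-and-Conics-on-X8}, and check the count is robust across $r=3,\ldots,8$ (for small $r$ one can just enumerate by hand from the tables of \S\ref{SS:dPd}).

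**Alternative, cleaner route.** Rather than brute-force coordinates, I might argue more structurally. Each $e_i$ lies on exactly $r-1$ reducible conics... no — better: the pencil of conics through a fixed configuration. Observe that for a fixed line $\ell$, the conic classes $\mc$ having $\ell$ as a fiber component are exactly $\{\ell + \ell' : \ell'\in\cL,\ \ell\cdot\ell' = 1\}$, a set I can enumerate via the Weyl action (the stabilizer of $\ell$ acts, and lines meeting $\ell$ form a single orbit or few orbits). Then "each of $e_1,\ldots,e_{r-2}$ is a fiber component of $\phi_\mc$" says $\mc$ lies in the intersection of these $r-2$ sets. For $\mc$ to contain all of $e_1,\ldots,e_{r-2}$ transversally among its (only $r-1$) reducible fibers, and since these $e_i$ are mutually disjoint, $\mc$ must restrict on $\{e_1,\ldots,e_{r-2}\}$ in a forced way; the remaining freedom is in one more reducible fiber (the pair not meeting the $e_i$'s) together with the "fiber at infinity", and I expect this to leave exactly a binary choice — which of two completions of the partial fiber-data is realized. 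The honest count of these two should again come down to the coordinate computation above.

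**Main obstacle.** The genuinely delicate point is not the existence of two classes but showing there are \emph{no others} — i.e. that no "exotic" conic class of higher degree type (entries $2$, $3$, $4$ appearing in the tables) can also have all of $e_1,\ldots,e_{r-2}$ as fiber components. This is where I expect to need the $\sum m_j^2 = d^2-1$, $\sum m_j = 3d-2$ constraints to do real work: after fixing $r-2$ of the $m_j$'s, the residual system must be shown to have exactly two integer solutions for every $r\in\{3,\ldots,8\}$. I'd handle this either by a uniform bound on $d$ (Cauchy–Schwarz on the three free coordinates forces $d$ small) or, failing a clean uniform argument, by the finite case check $r=3,\ldots,8$ against Table~\ref{tab:Lines-and-Conics-on-X8}. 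The Weyl-equivariance of the whole setup (\S\ref{SS:SS-33}) guarantees the count is the same for every exceptional $(r-2)$-tuple, so one configuration per $r$ suffices.
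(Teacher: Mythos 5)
Your overall strategy is the paper's: reduce by Weyl transitivity (\S\ref{SS:dPd}.(13)) to a standard exceptional tuple, translate the condition into ``$\mc-e_i\in\cL$ for each $i$'', and finish by a finite check against the explicit lists of lines and conic classes. However, your intersection-theoretic translation contains a genuine error that would derail the computation. A component $\ell$ of a reducible fiber of $\phi_\mc$ satisfies $\mc\cdot\ell=(\ell+\ell')\cdot\ell=-1+1=0$, \emph{not} $\mc\cdot\ell=1$; this is also why your aside $(\mc-e_i)^2=-3$ came out wrong. The correct coordinate condition is therefore $m_i=0$ for the relevant indices, not $m_i=\pm1$. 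Relatedly, for a conic class the relation $\mc^2=0$ gives $\sum m_j^2=d^2$, not $d^2-1$ (the latter belongs to lines, and even there the sign is off: $\ell^2=-1$ gives $\sum m_j^2=d^2+1$). Note also that once $\mc\cdot e_i=0$ one gets $(\mc-e_i)^2=-1$ and $(\mc-e_i)\cdot K=-1$ for free, so the ``line-class constraint'' you were worried about is automatic and need not be checked separately.

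With these corrections your Cauchy--Schwarz idea actually closes the argument cleanly and uniformly in $r$, arguably more transparently than the paper's table verification. Taking $\mathcal E=(e_3,\ldots,e_r)$ as in the paper, the conditions force $m_3=\cdots=m_r=0$, so $m_1+m_2=3d-2$ and $m_1^2+m_2^2=d^2$; then $(3d-2)^2\le 2d^2$ forces $d=1$, whence $\{m_1,m_2\}=\{0,1\}$ and the only solutions are $\mc_1=h-e_1$ and $\mc_2=h-e_2$. So the gap is not in the architecture of your proof but in the single identity $\mc\cdot e_i=1$: as written, your system has no solutions (or the wrong ones), and the ``main obstacle'' you identify --- ruling out exotic classes --- is resolved exactly by the corrected quadratic constraint, with no need for the case-by-case inspection of Table~\ref{tab:Lines-and-Conics-on-X8} that the paper invokes.
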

%%%%%%%%%%%%%%%%%%%%%%%%%%%%%%%%%%%%%%%
\begin{proof} 
Since $W$ acts transitively on the set of exceptional $(r-2)$-tuples 
(according to \S\ref{SS:dPd}.(13)), one can assume that $\mathcal E=(e_3,\ldots,e_{r})$.  Concretely, one wants to determine
the conic classes $\mathfrak c\in \cK $ such that 
$\mathfrak c- e_i \in \cL$ for $i=3,\ldots,r$. 
Since the two sets $\boldsymbol{\mathcal L}$ and $\boldsymbol{\mathcal K}$ are finite and 
can be explicitly described  (See Table \ref{tab:Lines-and-Conics-on-X8}  above for the case $r = 8$), the claim can be checked by a straightforward case by case verification.  
One finds that only the two conic classes $\mathfrak c_{1}=h-e_{1} $ and $ 
\mathfrak c_{2}=h-e_2 $ satisfy the above conditions.
\end{proof}

We consider the following element of $ \wedge^{r-2} \CC^{\cL}$:
$${\bf hlog}={\bf hlog}^{r-2}=\sum_{ \mc\in \cK} \tau_\mc\, .$$ 

 We now prove that this element is equal to zero, by decomposing it in the canonical basis of $ \wedge^{r-2} \CC^{\cL}$ given by the wedge products $\ell_1\wedge \ell_2\wedge\ldots \wedge \ell_{r-2}$ of $r-2$ pairwise distinct lines $\ell_1,\ldots,\ell_{r-2}\in \cL$.

Let $ \wedge^{r-2}_{exc} \CC^{\cL}$ be the proper subspace of $ \wedge^{r-2} \CC^{\cL}$ 
spanned by the wedge products $\ell_1\wedge \ell_2\wedge\ldots \wedge \ell_{r-2}$ for all 
exceptional $(r-2)$-tuple of lines $(\ell_i)_{i=1}^{r-2}$. 
We call such wedge products  {\it exceptional}. 
We fix once for all a basis of $ \wedge^{r-2}_{exc} \CC^{\cL}$ 
formed by exceptional wedge products and call it the {\it exceptional basis} of $ \wedge^{r-2}_{exc} \CC^{\cL}$. Note that this basis  is unique but only
 up to changing the signs of its elements. 
However, the arguments below are not essentially affected by this ambiguity.

We may assume that  $\wedge_{i=3}^r   e_i$ is an element of the exceptional basis and we denote by 
\begin{equation}\label{Lambda}
\Lambda : 
  \wedge^{r-2}_{exc} \CC^{\cL} \longrightarrow \CC 
 \end{equation}
  the associated linear coordinate form with respect to the exceptional basis we are working with,  i.e.,  $\Lambda$ is the linear form 
on  $\wedge^{r-2}_{exc} \CC^{\cL}$  
  characterized by the relations 
  $\Lambda( \wedge_{i=3}^r   e_i)=1$ and 
$\Lambda( \wedge_{i=3}^r   \ell_i)=0$  for any exceptional wedge product 
$\wedge_{i=3}^r   \ell_i $ such that $\{\ell_i\}_{i=3}^r \neq \{ e_i\}_{i=3}^r$.

We first prove the
%%%%%%%%%%%%%%%%%%%%%%%%%%%%%%%%%%%%%%%
\begin{lem} 
\label{Lem:In-wedge-exc}
%%%%%%%%%%%%%%%%%%%%%%%%%%%%%%%%%%%%%%%
For any $\mc\in \cK$,  $\tau_\mc$ belongs to $  \wedge^{r-2}_{exc} \CC^{\cL}$, therefore ${\bf hlog}=\sum_{ \mc\in \cK} \tau_\mc$ as well.
%%%%%%%%%%%%%%%%%%%%%%%%%%%%%%%%%%%%%%%
\end{lem}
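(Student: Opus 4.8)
The claim to prove is Lemma~\ref{Lem:In-wedge-exc}: for every conic class $\mc \in \cK$, the generator $\tau_\mc \in \wedge^{r-2}\CC^{\cL}$ lies in the subspace $\wedge^{r-2}_{exc}\CC^{\cL}$ spanned by exceptional wedge products. Since $\tau_\mc$ coincides (up to sign) with $\eta_\mc$, and by \eqref{Eq:(r-2)-Wedge-Cci} the image of $\eta_\mc$ in $\wedge^{r-2}\CC^{\cL}$ is $\big(C_\mc^1 - C_\mc^{r-1}\big)\wedge\cdots\wedge\big(C_\mc^{r-2} - C_\mc^{r-1}\big)$ where each $C_\mc^i = \ell_\mc^i + \tilde\ell_\mc^i$ is a reducible fiber, the first step is to expand this wedge product multilinearly. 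Expanding, $\tau_\mc$ becomes (up to sign) a $\ZZ$-linear combination of wedge products $m_1 \wedge m_2 \wedge \cdots \wedge m_{r-2}$ where each $m_i$ is one of the four lines $\ell_\mc^i, \tilde\ell_\mc^i, \ell_\mc^{r-1}, \tilde\ell_\mc^{r-1}$ — but crucially, for index $i$ the line $m_i$ is chosen from the $i$-th fiber $C_\mc^i$ or from the last fiber $C_\mc^{r-1}$.

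\textbf{Reduction to a combinatorial fact about lines in distinct fibers.} After expansion, every surviving term of $\tau_\mc$ is a wedge $m_1 \wedge \cdots \wedge m_{r-2}$ in which, for each $i$, the line $m_i$ lies either in $C_\mc^i$ or in $C_\mc^{r-1}$; any term with a repeated line vanishes automatically, so we only keep the ones with $r-2$ distinct lines. The claim will follow once I show: \emph{any such tuple $(m_1,\ldots,m_{r-2})$ of distinct lines, with $m_i$ a component of $C_\mc^i$ or of $C_\mc^{r-1}$, is an exceptional tuple}, i.e. $m_i \cdot m_j = 0$ for $i \neq j$. Equivalently, no two of the chosen lines meet. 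Here I would invoke the $W$-equivariance of Lemma~\ref{Lem:Lolo}.2: since $W$ acts transitively on $\cK$ and sends reducible fibers to reducible fibers, and since the exceptional subspace $\wedge^{r-2}_{exc}\CC^{\cL}$ is $W$-stable (the $W$-action permutes lines and preserves the intersection pairing, hence preserves exceptional tuples), it suffices to treat one representative, say $\mc = \mc_1 = h - e_1$, whose reducible fibers are $C^i = l_{1i} + e_i = (h - e_1 - e_i) + e_i$ for $i = 2,\ldots,r$.

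\textbf{The concrete verification for $\mc_1$.} For $\mc_1 = h-e_1$, expanding $\tau_{\mc_1} = (C^2 - C^r)\wedge\cdots\wedge(C^{r-1}-C^r)$ produces wedges of distinct lines drawn from the pool $\{h-e_1-e_2, e_2, \ldots, h-e_1-e_{r-1}, e_{r-1}, h-e_1-e_r, e_r\}$, with the $i$-th slot filled from $\{h - e_1 - e_{i+1}, e_{i+1}, h - e_1 - e_r, e_r\}$. I would then check the intersection numbers directly using $h^2 = 1$, $h \cdot e_i = 0$, $e_i \cdot e_j = -\delta_{ij}$: one finds $e_i \cdot e_j = 0$, $(h-e_1-e_i)\cdot e_j = 0$ for $j \neq 1, i$, and $(h-e_1-e_i)\cdot(h-e_1-e_j) = 1 - 1 = 0$ for $i \neq j$; only $e_i \cdot (h - e_1 - e_i) = 1$, i.e. the two components of the \emph{same} fiber meet, which is exactly the pairing forbidden in a term of distinct lines coming from distinct fibers. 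Hence every surviving term of $\tau_{\mc_1}$ is an exceptional wedge product, so $\tau_{\mc_1} \in \wedge^{r-2}_{exc}\CC^{\cL}$; applying an element $w$ of $W$ with $w \cdot \mc_1 = \mc$ and using that $\wedge^{r-2}_{exc}\CC^{\cL}$ is $W$-stable gives $\tau_\mc = (-1)^w (w \cdot \tau_{\mc_1}) \in \wedge^{r-2}_{exc}\CC^{\cL}$ for all $\mc$, and summing over $\mc \in \cK$ yields ${\bf hlog} \in \wedge^{r-2}_{exc}\CC^{\cL}$. The only mild subtlety — the ``main obstacle'', though it is minor — is making precise that the expansion keeps exactly the terms with pairwise distinct lines and that each such term is automatically exceptional; once the intersection-number computation above is in hand, this is immediate, and the $W$-equivariance reduction removes any need to repeat it for other conic classes.
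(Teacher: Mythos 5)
Your overall strategy (expand the wedge product and check that each surviving term is an exceptional wedge product) is the right one and is essentially the paper's, but there is a genuine error in the key combinatorial step. You claim that every term $m_1\wedge\cdots\wedge m_{r-2}$ of the expansion with pairwise \emph{distinct} lines, where $m_i$ is a component of $C_\mc^i$ or of $C_\mc^{r-1}$, is automatically exceptional. This is false: two different slots $i\neq j$ may both draw from the last fiber $C_\mc^{r-1}=\ell_\mc^{r-1}+\tilde\ell_\mc^{r-1}$, one taking $\ell_\mc^{r-1}$ and the other $\tilde\ell_\mc^{r-1}$; these are distinct lines, yet they meet ($\ell_\mc^{r-1}\cdot\tilde\ell_\mc^{r-1}=1$, being the two components of one reducible conic), so the corresponding term is not exceptional. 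Your own intersection table for $\mc_1$ exhibits exactly this pairing ($e_r\cdot(h-e_1-e_r)=1$, with both of these lines available to \emph{every} slot), so the phrase ``a term of distinct lines coming from distinct fibers'' silently assumes the very point that needs proof, namely that each surviving term uses at most one line from each fiber.

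The fix is to perform the expansion at the level of the conic classes first: any term of $(C_\mc^1-C_\mc^{r-1})\wedge\cdots\wedge(C_\mc^{r-2}-C_\mc^{r-1})$ in which $C_\mc^{r-1}$ is selected in two or more slots vanishes, because it contains the factor $C_\mc^{r-1}\wedge C_\mc^{r-1}=0$ in $\wedge^2\CC^{\cL}$; equivalently, the offending non-exceptional line-level terms cancel in pairs. The surviving terms are $\pm\, C_\mc^{j_1}\wedge\cdots\wedge C_\mc^{j_{r-2}}$ with $j_1,\ldots,j_{r-2}$ pairwise distinct, and since distinct fibers of $\phi_\mc$ are pairwise disjoint, expanding each $C_\mc^{j}$ into its two components now produces only exceptional wedge products. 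With this observation the argument is uniform in $\mc$: disjointness of distinct fibers requires no intersection computation and no reduction to $\mc_1$, which is exactly how the paper's two-line proof proceeds. Your $W$-equivariance reduction and the explicit intersection table are correct but unnecessary overhead once the cancellation is handled properly.
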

%%%%%%%%%%%%%%%%%%%%%%%%%%%%%%%%%%%%%%%
%%%%%%%%%%%%%%%%%%%%%%%%%%%%%%%%%%%%%%%
%%%%%%%%%%%%%%%%%%%%%%%%%%%%%%%%%%%%%%%
\begin{proof} 
%%%%%%%%%%%%%%%%%%%%%%%%%%%%%%%%%%%%%%%
For $\mc\in \cK$, $\tau_\mc$ is equal to  \eqref{Eq:(r-2)-Wedge-Cci} up to sign. 
The lemma follows easily by noticing that  the conics $C_{\mc}^1,\ldots,C_{\mc}^{r-1}$  are pairwise disjoint and because each of them is the sum of two lines. 
%Since $ \wedge^{r-2}_{exc} \CC^{\cL}$ is stable under the action of $W$, 
%the claim follows by remarking that the statement is obvious in the specific case of  
%$\mc_1=h-e_1$. 
%%%%%%%%%%%%%%%%%%%%%%%%%%%%%%%%%%%%%%%
\end{proof} 
%%%%%%%%%%%%%%%%%%%%%%%%%%%%%%%%%%%%%%%

We also need  the following 
%%%%%%%%%%%%%%%%%%%%%%%%%%%%%%%%%%%%%%%
\begin{lem} 
\label{Lem:hlog-signature}
%%%%%%%%%%%%%%%%%%%%%%%%%%%%%%%%%%%%%%%
%The element  ${\bf hlog}$ of $\wedge^{r-2}\CC^{\cL_r}$  is 
%signature-invariant, i.e., 
For any $w\in W$, one has
$ 
w\cdot {\bf hlog}=(-1)^w \,{\bf hlog}
$.
%%%%%%%%%%%%%%%%%%%%%%%%%%%%%%%%%%%%%%%
\end{lem}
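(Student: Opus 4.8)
The plan is to track how the individual generators $\tau_\mc$ are permuted by $W$, and to check that each permutation comes with exactly the right sign so that the whole sum $\sum_{\mc\in\cK}\tau_\mc$ transforms as the signature. First I would recall from Lemma \ref{Lem:Lolo}.2 that $\tau_\mc$ is intrinsically attached to $\mc$ (independently of the choice of $w$ with $\mc=w\cdot\mc_1$), and that by definition $\tau_{w\cdot\mc_1}=(-1)^w\,(w\cdot\tau_{\mc_1})$. The first substantive step is to upgrade this to the statement that for \emph{any} conic class $\mc$ and any $w\in W$ one has
\begin{equation*}
w\cdot \tau_{\mc}=(-1)^w\,\tau_{w\cdot\mc}\, .
\end{equation*}
This is a short cocycle-type computation: writing $\mc=u\cdot\mc_1$ we get $w\cdot\tau_\mc=(-1)^u\,(wu)\cdot\tau_{\mc_1}=(-1)^u (-1)^{wu}\tau_{(wu)\cdot\mc_1}=(-1)^w\,\tau_{w\cdot\mc}$, using that the signature is a homomorphism and $(-1)^u(-1)^{wu}=(-1)^w$.

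Granting this, the lemma is immediate: since $W$ acts on $\cK$ (so $\mc\mapsto w\cdot\mc$ is a bijection of $\cK$), we compute
\begin{equation*}
w\cdot{\bf hlog}=\sum_{\mc\in\cK} w\cdot\tau_\mc=\sum_{\mc\in\cK}(-1)^w\,\tau_{w\cdot\mc}=(-1)^w\sum_{\mc'\in\cK}\tau_{\mc'}=(-1)^w\,{\bf hlog}\, ,
\end{equation*}
where in the third equality we re-indexed the sum along the bijection $\mc\mapsto\mc'=w\cdot\mc$. Note that because $W$ is generated by the simple reflections $s_1,\ldots,s_r$, it would suffice to prove the displayed identity for each $s_i$; but the general argument above is no longer than the special case, so I would simply do it in one stroke.

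The only point that needs a little care — and it is really the main (mild) obstacle — is that each $\tau_\mc$ is only well defined up to a global sign because the exceptional basis of $\wedge^{r-2}_{exc}\CC^{\cL}$ (and hence the identification of $\wedge^{r-2}\HH_{\mc_1}$ with the signature representation in Lemma \ref{Lem:Lolo}) was fixed only up to signs. However, once a choice of $\tau_{\mc_1}$ is fixed, Lemma \ref{Lem:Lolo}.2 pins down every $\tau_\mc$ unambiguously, so ${\bf hlog}$ is a well-defined element of $\wedge^{r-2}\CC^{\cL}$ and the computation above is literally an identity, not an identity-up-to-sign. I would remark that this is exactly why the statement is phrased for the specific generators $\tau_\mc$ produced in Lemma \ref{Lem:Lolo} rather than for arbitrary generators of the lines $\wedge^{r-2}\HH_\mc$. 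With that understood, the proof is the two displayed computations and nothing more.
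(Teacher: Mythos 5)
Your proof is correct and follows essentially the same route as the paper's: both establish the equivariance relation $w\cdot\tau_\mc=(-1)^w\,\tau_{w\cdot\mc}$ from the definition $\tau_{u\cdot\mc_1}=(-1)^u\,(u\cdot\tau_{\mc_1})$ of Lemma \ref{Lem:Lolo}.2 together with the multiplicativity of the signature, and then conclude by re-indexing the sum along the bijection $\mc\mapsto w\cdot\mc$ of $\cK$. The only cosmetic difference is that the paper writes the intermediate step out in terms of the wedge products $\wedge_{i=2}^{r-1}(w w_\mc\cdot C^i - w w_\mc\cdot C^r)$, whereas you phrase it as an abstract cocycle computation; the content is identical.
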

%%%%%%%%%%%%%%%%%%%%%%%%%%%%%%%%%%%%%%%
\noindent{\it Proof.} 
%%%%%%%%%%%%%%%%%%%%%%%%%%%%%%%%%%%%%%%
 For a conic class $\mc\in\cK$, let $w_\mc \in W$ be such that $\mc=w_\mc \cdot\mc_1$. By the definition of $\tau_\mc$, one has
$$\tau_\mc
= (-1)^{w_\mc} w_\mc\cdot \tau_{\mc_1}
= (-1)^{w_\mc} \wedge_{i=2}^{r-1} \Big(w_\mc\cdot C^i-w_\mc \cdot C^{r}\Big)\,.
$$ 
 
Let $w$ be an arbitrary element of $W$. 
From \eqref{Eq:bullet} and because $ww_\mc\cdot \mc_1=w\cdot\mc$, it comes that 
%  $w\cdot\tau_\mc
%= (-1)^{w_\mc} \wedge_{i=1}^{r-2} \big(ww_\mc\cdot C^i-ww_\mc \cdot %C^{r-1}\big)
%=(-1)^{w}\tau_{w\cdot\mc}
%$ for any $w\in W$.
$$w\cdot\tau_\mc
= (-1)^{w_\mc} \wedge_{i=2}^{r-1} \Big(ww_\mc\cdot C^i-ww_\mc \cdot C^{r}\Big)
=(-1)^{w}\tau_{w\cdot\mc}
\,.
$$
%%%%%%%%%%%%%%%%%%%%%%%%%%%%%%%%%%%%%%%
Summing up on the conic classes and because $\mc\mapsto w\cdot \mc$ is a bijection of $\cK$, one gets 
%%%%%%%%%%%%%%%%%%%%%%%%%%%%%%%%%%%%%%%
$$
 {}^{} \hspace{3.5cm}
w\cdot {\bf hlog}= \sum_{\mc\in\cK}w\cdot \tau_\mc=(-1)^w\sum_{\mc\in\cK}\tau_{w\cdot\mc}=(-1)^w\, {\bf hlog}\, .  \hspace{3.3cm}\qed
$$

 Since the $W$-action on $\Pic(X)$ preserves the intersection product, 
$\wedge^{r-2}_{exc} \CC^{\cL}$ is a proper $W$-submo\- -dule of $\wedge^{r-2} \CC^{\cL}$. Furthermore, $W$ acts on $\wedge^{r-2}_{exc} \CC^{\cL}$  
by permuting the elements of the exceptional basis. 
Moreover, by \S\ref{SS:dPd}.(13) this action 
on the exceptional wedge products is transitive (up to sign). 
%{\red{\bf (Which one??)}}\footnote{\red{There two actions of $W$here: the one on $\wedge^{r-2} \CC^{\cL}$,  the other (inducing the former) on the exceptional wedge product elements of the exceptional basis. Maybe we should be more precise here?}}
 To check that ${\bf hlog}=0$ in  $\wedge^{r-2}_{exc} \CC^{\cL}$, it suffices to check that any element
$\wedge_{i=1}^{r-2}\ell_i$ in the exceptional basis appears in ${\bf hlog}$ with coefficient zero. For such 
$\wedge_{i=1}^{r-2}\ell_i$, let  $w\in W$ be such that 
$w\cdot(\wedge_{i=3}^r  e_i)=\wedge_{i=1}^{r-2}\ell_i$. It suffices to check that $\Lambda(w^{-1}\cdot {\bf hlog})=0$ (see notation (\ref{Lambda}). 
By Lemma \ref{Lem:hlog-signature}, this would follow from verifying  that $\Lambda({\bf hlog})=0$.

From Lemma 
\eqref{Claim:E-exceptional-(r-2)-tuple},  the only conic classes  $\mc\in \cK$ 
 for which 
$\wedge_{i=3}^r e_i $ appears with non-zero coefficient in the decomposition of $\tau_\mc$ in the exceptional basis are 
$\mc_{1}=h-e_{1}$ and $
\mc_2=h-e_2$, i.e.,
\begin{equation}
\label{Eq:zzz}
 \Big\{\, \mc\in \cK\, \,\big\lvert \, \,  \Lambda(\mc)\neq 0\, \Big\}= \big\{\, 
\mc_{1}\, , \, \mc_{2}\, \big\}\, . 
\end{equation}

 Consequently, one has 
\begin{equation}
\label{Eq:koko}
\Lambda\big(
{\bf hlog}\big)=
 \sum_{\mc\in\cK} \Lambda\big( \tau_\mc \big)=
\Lambda\big(
 \tau_{\mc_{1}}\big)+\Lambda\big(
 \tau_{\mc_{2}}\big)
\, .
\end{equation}

On the other hand, considering our initial choice for $\tau_{\mc_1}$, we have (see \S3.2 above) 
$$
\tau_{\mc_1}=\Big(  \ell_{12}+e_2-\ell_{1r}-e_r\Big)
\wedge 
\Big( \ell_{13}+e_3-\ell_{1r}-e_r \Big)\wedge \ldots
\wedge \Big(\ell_{1\,r-1}+e_{r-1}-\ell_{1r}-e_r \Big) 
$$
from which we have immediately that 
\begin{equation}
\label{Eq:Lambda(tau-c1)}
\Lambda \big(  \tau_{\mc_{1}} \big) =
\Lambda\Big(  -e_r \wedge e_3 \wedge e_4\wedge \ldots \wedge e_{r-1}\Big)=(-1)^r\, .
\end{equation}

From \S\ref{SS:dPd}.(7), we know that 
 the first fundamental reflection $s_1$ acts as the transposition exchanging $e_1$ and $e_2$ on the set $\{ h, e_1,\ldots,e_r\}$.  
Therefore, one has  $s_1\cdot \mc_1=\mc_2$  
 and $s_1\cdot \Big( e_3\wedge\ldots\wedge e_{r}\Big)=
\big( s_1\cdot e_3\big) \wedge\ldots\wedge \big( s_1\cdot e_r\big)
= e_3\wedge\ldots\wedge e_{r}
$ 
which implies that
$
\Lambda \big( s_1\cdot 
 \tau_{\mc_{1}}\big)=
 \Lambda \big(
 \tau_{\mc_{1}}\big)%=(-1)^r
$.
%%%%%%%%%%%%%%%%%%%%%%%%%%%%%%%%%%%%%%%%
%%%%%%%%%%%%%%%%%%%%%%%%%%%%%%%%%%%%%%%%
  Since $\tau_{\mc_2}=(-1)^{s_1} s_1\cdot \tau_{\mc_1}$ by definition and because $(-1)^{s_1}=-1$, one obtains that 
\begin{equation}
\label{Eq:Lambda(tau-c2)}
\Lambda( 
 \tau_{\mc_{2}})=(-1)^{r-1}=- \Lambda( 
 \tau_{\mc_{1}})\, .
\end{equation}  
%$
%\Lambda( 
%\tau_{\mc_{2}})=(-1)^{r-1}=- \Lambda( 
%\tau_{\mc_{1}})$ 

Substituting \eqref{Eq:Lambda(tau-c1)} and \eqref{Eq:Lambda(tau-c2)} in \eqref{Eq:koko} gives 
$\Lambda({\bf hlog})=0$ which, as explained above,  implies the
%%%%%%%%%%%%%%%%%%%%%%%%%%%%%%%%%%%%%%%%%
\begin{lem}
\label{Lem:gno-gno}
One has ${\bf hlog}=0$. 
%%%%%%%%%%%%%%%%%%%%%%%%%%%%%%%%%%%%%%%%%
\end{lem}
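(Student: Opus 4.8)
The plan is to show that the single coordinate computation $\Lambda({\bf hlog})=0$ already forces ${\bf hlog}=0$, by exploiting the $W$-equivariance established in Lemma \ref{Lem:hlog-signature} together with the transitivity of the $W$-action on the exceptional basis. First I would recall that by Lemma \ref{Lem:In-wedge-exc} we have ${\bf hlog}\in \wedge^{r-2}_{exc}\CC^{\cL}$, so it suffices to compute the coefficient of ${\bf hlog}$ along each element $\wedge_{i=1}^{r-2}\ell_i$ of the chosen exceptional basis. Fix such a basis element. By \S\ref{SS:dPd}.(13), the $W$-action on exceptional $(r-2)$-tuples is transitive, so there is $w\in W$ with $w\cdot\big(\wedge_{i=3}^r e_i\big)=\pm\,\wedge_{i=1}^{r-2}\ell_i$; hence the coefficient of $\wedge_{i=1}^{r-2}\ell_i$ in ${\bf hlog}$ equals, up to sign, the coefficient $\Lambda\big(w^{-1}\cdot {\bf hlog}\big)$ of $\wedge_{i=3}^r e_i$ in $w^{-1}\cdot{\bf hlog}$. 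By Lemma \ref{Lem:hlog-signature}, $w^{-1}\cdot{\bf hlog}=(-1)^{w^{-1}}{\bf hlog}=\pm\,{\bf hlog}$, so this coefficient is $\pm\,\Lambda({\bf hlog})$.

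It therefore remains only to verify $\Lambda({\bf hlog})=0$, and this is exactly the content of the computation assembled above. The key input is Lemma \ref{Claim:E-exceptional-(r-2)-tuple}: the exceptional $(r-2)$-tuple $(e_3,\ldots,e_r)$ appears as the tuple of line-components of reducible fibers for exactly two conic classes, namely $\mc_1=h-e_1$ and $\mc_2=h-e_2$. This gives \eqref{Eq:zzz}, hence \eqref{Eq:koko}: $\Lambda({\bf hlog})=\Lambda(\tau_{\mc_1})+\Lambda(\tau_{\mc_2})$. From the explicit form of $\tau_{\mc_1}$ in terms of the reducible fibers $C^i=l_{1i}+e_i$, expanding the wedge product and keeping only the $\wedge_{i=3}^r e_i$ contribution (which comes from selecting the term $e_i$ from each factor $C^i-C^r$, i.e. picking $-e_r$ from each) yields $\Lambda(\tau_{\mc_1})=(-1)^r$ as in \eqref{Eq:Lambda(tau-c1)}. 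For $\tau_{\mc_2}$, one uses $\tau_{\mc_2}=(-1)^{s_1}\,s_1\cdot\tau_{\mc_1}=-\,s_1\cdot\tau_{\mc_1}$ together with the fact that $s_1$ fixes each of $e_3,\ldots,e_r$, so $\Lambda(s_1\cdot\tau_{\mc_1})=\Lambda(\tau_{\mc_1})$; hence $\Lambda(\tau_{\mc_2})=-(-1)^r=(-1)^{r-1}$, as in \eqref{Eq:Lambda(tau-c2)}. Adding, $\Lambda({\bf hlog})=(-1)^r+(-1)^{r-1}=0$.

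Combining the two paragraphs: every coordinate of ${\bf hlog}$ in the exceptional basis vanishes, so ${\bf hlog}=0$ in $\wedge^{r-2}_{exc}\CC^{\cL}$, and a fortiori in $\wedge^{r-2}\CC^{\cL}$ and in $\wedge^{r-2}\HH$ via the injection of \S\ref{SS:SS-32}. The main obstacle in this argument is really Lemma \ref{Claim:E-exceptional-(r-2)-tuple}, which pins down that precisely two conic classes are ``adapted'' to a given exceptional $(r-2)$-tuple; but this has already been reduced (using transitivity of $W$ on such tuples) to a finite case-by-case check against the explicit tables of lines and conic classes, so no real difficulty remains. The only subtlety to keep in mind is the sign ambiguity in the choice of the exceptional basis, but since we are only ever asserting that certain coordinates equal zero, the ambiguity is harmless, as already noted in the text.
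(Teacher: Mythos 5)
Your proof is correct and follows essentially the same route as the paper: reduce to the single coordinate $\Lambda(\mathbf{hlog})=0$ via the signature equivariance of Lemma \ref{Lem:hlog-signature} and the transitivity of $W$ on exceptional $(r-2)$-tuples, then evaluate $\Lambda(\tau_{\mc_1})+\Lambda(\tau_{\mc_2})$ using Lemma \ref{Claim:E-exceptional-(r-2)-tuple}. The only blemish is the parenthetical describing the expansion of $\tau_{\mc_1}$ (one picks $e_i$ from the factors $i=3,\dots,r-1$ and $-e_r$ from the factor $i=2$, not ``$-e_r$ from each''), but the resulting value $(-1)^r$ and the overall argument are exactly those of the paper.
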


The assertion 3.\,at the beginning of \S\ref{SS:zogo} is proved. 
We now prove assertion 4. Consider the graph with vertices in $\cK$, with $\mc, \mc'\in\cK$ joined by an edge 
if there exists an element $\wedge_{i=1}^{r-2}\ell_i$ of the exceptional basis that appears with non-zero coefficient in the decomposition of both 
$\tau_\mc$ and $\tau_{\mc'}$ in the exceptional basis. We have

%%%%%%%%%%%%%%%%%%%%%%%%%%%%%%%%%%%%%%%%%
\begin{lem}
\label{Lem:gno-gno}
%%%%%%%%%%%%%%%%%%%%%%%%%%%%%%%%%%%%%%%%%
For any element $\ell_1\wedge \ldots \wedge \ell_{r-2}$ of the exceptional basis,  there are exactly two conic classes  $\mc,\mc'\in \cK$ such that $\ell_1\wedge \ldots \wedge \ell_{r-2}$ appears with non-zero coefficient in the decomposition of $\tau_\mc$  and $\tau_{\mc'}$ in the exceptional basis. 
Moreover, these two coefficients are opposite. 
\end{lem}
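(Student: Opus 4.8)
The plan is to follow the same strategy already used for Lemma~\ref{Claim:E-exceptional-(r-2)-tuple}, exploiting the transitivity (up to sign) of the $W$-action on the exceptional basis. First I would observe that, since $W$ acts on $\wedge^{r-2}_{exc}\CC^{\cL}$ by permuting the elements of the exceptional basis (up to sign), and since this action on exceptional wedge products is transitive, it suffices to prove the statement for one fixed element of the exceptional basis: the convenient choice is $\wedge_{i=3}^r e_i$. Indeed, for an arbitrary element $\wedge_{i=1}^{r-2}\ell_i$ of the exceptional basis, pick $w\in W$ with $w\cdot(\wedge_{i=3}^r e_i)=\pm\wedge_{i=1}^{r-2}\ell_i$; then, using the equivariance $w\cdot\tau_\mc=(-1)^w\tau_{w\cdot\mc}$ established in the proof of Lemma~\ref{Lem:hlog-signature} together with the fact that $w$ permutes the exceptional basis up to sign, the coefficient of $\wedge_{i=1}^{r-2}\ell_i$ in $\tau_\mc$ equals, up to a uniform nonzero scalar, the coefficient of $\wedge_{i=3}^r e_i$ in $\tau_{w^{-1}\cdot\mc}$; so the set of conic classes with nonzero coefficient on $\wedge_{i=1}^{r-2}\ell_i$ is the $w$-image of the set with nonzero coefficient on $\wedge_{i=3}^r e_i$, and the two coefficients are opposite in one case iff they are opposite in the other.

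Next I would handle the base case $\wedge_{i=3}^r e_i$ directly. By Lemma~\ref{Claim:E-exceptional-(r-2)-tuple}, the exceptional $(r-2)$-tuple $\mathcal E=(e_3,\ldots,e_r)$ is a collection of fiber components of exactly two conic fibrations, namely those attached to $\mc_1=h-e_1$ and $\mc_2=h-e_2$; equivalently, as recorded in \eqref{Eq:zzz}, $\{\mc\in\cK\mid\Lambda(\mc)\neq 0\}=\{\mc_1,\mc_2\}$, where $\Lambda$ is the coordinate form dual to $\wedge_{i=3}^r e_i$. This is precisely the "exactly two conic classes" assertion for this element. The "opposite coefficients" claim is then exactly \eqref{Eq:Lambda(tau-c1)} combined with \eqref{Eq:Lambda(tau-c2)}: one computes $\Lambda(\tau_{\mc_1})=(-1)^r$ from the explicit expression of $\tau_{\mc_1}$ (expanding the wedge of the $C^i-C^r$ and reading off the term involving only the $e_i$'s), and then $\Lambda(\tau_{\mc_2})=-\Lambda(\tau_{\mc_1})$ because $\tau_{\mc_2}=(-1)^{s_1}s_1\cdot\tau_{\mc_1}$ with $(-1)^{s_1}=-1$ and $s_1$ fixing $\wedge_{i=3}^r e_i$.

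Finally I would transport this back to the general element $\wedge_{i=1}^{r-2}\ell_i$ via the $w$ chosen above. Writing $w\cdot(\wedge_{i=3}^r e_i)=\varepsilon\,\wedge_{i=1}^{r-2}\ell_i$ with $\varepsilon\in\{\pm1\}$ and letting $\Lambda'$ be the coordinate form dual to $\wedge_{i=1}^{r-2}\ell_i$, the identity $\Lambda'(x)=\varepsilon\,(-1)^w\,\Lambda(w^{-1}\cdot x)$ holds on $\wedge^{r-2}_{exc}\CC^{\cL}$; applying this to $x=\tau_\mc$ and using $w^{-1}\cdot\tau_\mc=(-1)^{w}\tau_{w^{-1}\cdot\mc}$ gives $\Lambda'(\tau_\mc)=\varepsilon\,\Lambda(\tau_{w^{-1}\cdot\mc})$. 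Hence $\Lambda'(\tau_\mc)\neq 0$ exactly for the two classes $\mc=w\cdot\mc_1$ and $\mc=w\cdot\mc_2$, and the two nonzero values are $\varepsilon\,(-1)^r$ and $\varepsilon\,(-1)^{r-1}$, which are opposite. I expect the main (though still modest) obstacle to be purely bookkeeping: making the sign tracking through the $w$-twist completely airtight, in particular keeping straight the ambiguity in the exceptional basis (its elements are only defined up to sign) and confirming that this ambiguity cancels — which it does, since changing the sign of a basis element changes the sign of both the coefficient form and, consistently, the value being evaluated, leaving the "equal-and-opposite" conclusion intact.
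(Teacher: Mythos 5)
Your proof is correct and is precisely the paper's own argument: the paper's proof consists of noting that the case $e_3\wedge\cdots\wedge e_r$ was already established via \eqref{Eq:zzz}, \eqref{Eq:Lambda(tau-c1)} and \eqref{Eq:Lambda(tau-c2)}, and that the general case follows by the $W$-action — exactly the reduction you carry out, just with the sign bookkeeping made explicit. One harmless slip: the transport relation should read $\Lambda'(x)=\varepsilon\,\Lambda(w^{-1}\cdot x)$ without the extra $(-1)^w$, so the two nonzero values come out as $\varepsilon(-1)^w(-1)^r$ and $\varepsilon(-1)^w(-1)^{r-1}$ rather than $\varepsilon(-1)^r$ and $\varepsilon(-1)^{r-1}$; since the discrepancy is a uniform sign, the ``exactly two, with opposite coefficients'' conclusion is unaffected.
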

%%%%%%%%%%%%%%%%%%%%%%%%%%%%%%%%%%%%%%%%%
\begin{proof}
%%%%%%%%%%%%%%%%%%%%%%%%%%%%%%%%%%%%%%%%%
For $e_3\wedge\ldots\wedge e_r$, this has been proved above ({\it cf.}\,\eqref{Eq:zzz}, \eqref{Eq:Lambda(tau-c1)} and \eqref{Eq:Lambda(tau-c2)}). The general case follows by considering the action of $W$. 
%%%%%%%%%%%%%%%%%%%%%%%%%%%%%%%%%%%%%%%%%
\end{proof}
%%%%%%%%%%%%%%%%%%%%%%%%%%%%%%%%%%%%%%%%%

Assume now that there exists a relation $\sum_{c\in\cK} c_\mc\tau_\mc=0$ for some $c_\mc\in\CC$. 
By Lemma \ref{Lem:gno-gno}, for two conic classes $\mc$ and $\mc'$ connected by an edge, we have $c_\mc+c_{\mc'}=0$. 
Hence it suffices to prove that our graph is connected. As the action of $W$ on $\cK$ is transitive, it suffices to check that for all 
$w\in W$ the classes $\mc_1=h-e_1$ and $w\cdot \mc_1$ are connected by a sequence of edges. Furthermore, using again the action of $W$, it suffices to check this for $w=s_i$ ($i=1,\ldots,r$).
The reflections $s_2,\ldots, s_r$ belong to the stabilizer of $\mc_1$, so there is nothing to prove. If $w=s_1$, then $\mc_2=w\cdot \mc_1=h-e_2$. 
As proved above, $\mc_1$ and $\mc_2$ are connected by an edge.  
%as the element $\wedge_{i=3}^{r}e_i$ appears with non-zero coefficient in the decomposition of both 
%$\tau_\mc$ and $\tau_{\mc'}$ in the exceptional basis.
The proof of Theorem \ref{Thm:main} is now complete.
\mk

%%%%%%%%%%%%%%%%%%%%%%%%%%%%%%%%%%%%%%%
%%%%%%%%%%%%%%%%%%%%%%%%%%%%%%%%%%%%%%%
\subsection{A representation-theoretic interpretation}
\label{SS:representation-theoretic-interpretation}
%%%%%%%%%%%%%%%%%%%%%%%%%%%%%%%%%%%%%%%
%%%%%%%%%%%%%%%%%%%%%%%%%%%%%%%%%%%%%%%
The $\boldsymbol{\mathcal K}$-tuple $\tau_{\cK}=\big( \tau_{\mathfrak c}\big)_{{\mathfrak c} \in \cK}$ is an algebraic avatar 
of  the $\kappa_r$-tuple of hyperlogarithms  $\big( \epsilon_i\,AI_i^{r-2}(U_i)\big)_{i=1}^{\kappa_r}$ involved in the statement of Theorem \ref{Thm:MMain}. 
 It turns out that $\tau_{\cK}$  as well as 
the fact that the identity 
\begin{equation}
\label{Eq:hlog-r-2}
{\bf hlog}^{r-2}
%\stackrel{def}{=}
=
\sum_{ {\mathfrak c} \in \cK } \tau_{\mathfrak c}=0
\end{equation}
%its components satisfy the identity $\sum_{ {\mathfrak c} \in \boldsymbol{\mathcal K}_r } \Omega_{\mathfrak c}=0$ 
is satisfied in $\wedge^{r-2} \CC^\cL$ can be interpreted within the representation theory of the Weyl goup $W$.  This subsection is devoted to an exposition of this. 
For details, we refer to \cite{PirioChat} where the second author used this approach 
to give a representation theoretic proof of Theorem \ref{Thm:main}
 for del Pezzo surfaces of degree $d\in \{2,\ldots,6\}$.\sk
 \newpage
 
 The key points from this perspective are the following ({\it cf.}\,\cite[\S3.2]{PirioChat} for details): 
 %%%%%%%%%%%%%%%%%%%%%%%%%%%%
 \begin{enumerate}
 %%%%%%%%%%%%%%%%%%%%%%%%%%%%
 \item[$1.$] one can define  a natural action of  $W$ on the direct sum $\oplus_{{\mathfrak c} \in \cK } \wedge^{r-2} {\HH}_{\mathfrak c} $ such that the   map $$\iota_{\cK} : 
 \oplus_{{\mathfrak c} \in \cK } \wedge^{r-2} {\HH}_{\mathfrak c} 
 \longrightarrow \wedge^{r-2} \CC^{\cL}$$ induced by the natural inclusion 
 $\wedge^{r-2} {\HH}_{\mathfrak c} \hookrightarrow \wedge^{r-2} {\HH} \stackrel{{\rm Res}_L}{\longrightarrow}  \wedge^{r-2} \CC^{\cL}$  becomes a morphism of $W$-representations;
 \mk 
 %%%%%%%%%%%%%%%%%%%%%%%%%%%%
% \vspace{-0.35cm}
 \item[2.] as a $W$-representation, $\oplus_{{\mathfrak c} \in \cK } \wedge^{r-2} {\HH}_{\mathfrak c} $ is isomorphic to ${\bf sign} \otimes \mathbf C^{\cK}$  
 where ${\bf sign}$ stands for the signature $W $-representation and where 
 the $W$-module structure on $\mathbf C^{\cK}$ is the one induced by the action  of $W$ on $\cK$ by permutations;  \mk 
 %%%%%%%%%%%%%%%%%%%%%%%%%%%%
 \item[3.] the span of  $\tau_{\cK}=\big( \tau_{\mathfrak c}\big)_{{\mathfrak c} \in \cK}$ is $W$-invariant and is the unique 1-dimensional irreducible component of $\oplus_{{\mathfrak c} \in \cK} \wedge^{r-2} {\HH}_{\mathfrak c}$ which is isomorphic to the signature representation ${\bf sign}$;\mk
  %%%%%%%%%%%%%%%%%%%%%%%%%%%%
 \item[4.]
 from  1.\,and 3.\,it follows that $ 
  \iota_{\cK}\big( 
  \tau_{\cK}
  \big) =
 \sum_{\mathfrak c} \tau_{\mathfrak c} 
  = {\bf hlog}^{r-2}
 $ spans a $W$-subrepresentation of $\wedge^{r-2}\mathbf C^{\cL}$ which either is zero or is isomorphic to 
 ${\bf sign}$;\mk
  %%%%%%%%%%%%%%%%%%%%%%%%%%%%
 \item[5.] the decomposition of $\wedge^{r-2}\mathbf C^{\cL}$ in $W$-irreducibles can be determined explicitly (by means of computations with GAP). In particular, ${\bf sign}$ appears with positive multiplicity in this decomposition if and only if $r=8$.
%%%%%%%%%%%%%%%%%%%%%%%%%%%%
 \end{enumerate}
 %%%%%%%%%%%%%%%%%%%%%%%%%%%%
 
From the points 4.\,and 5.\,above, one obtains 
 an alternative, conceptual proof of the identity  \eqref{Eq:Main-equation} of
Theorem \ref{Thm:main}. This proof relies on the decompositions of 
 $\wedge^{r-2}\mathbf C^{\cL}$ in irreducible $W$-modules  
 which  are interesting on their own and appear to be new for $r>4$ (see  \cite[Proposition\,3.2]{PirioChat}). Note, for $r = 8$ one would need to adapt this
approach in order to prove our main result, possibly by considering the subrepresentation given
by $\wedge^{r-2}_{exc} \mathbf C^{\cL}$ (Lemma \ref{Lem:In-wedge-exc}).  
 More generally, it would be interesting to determine the decomposition into irreducibles of  
$\wedge^{r-2}_{exc} \mathbf C^{\cL}$ for any 
$r$ and to verify wether  it admits the signature as one of its irreducible components or not (we know that it is not the case for $r\leq 7$).

%%%%%%%%%%%%%%%%%%%%%%%%%%%%%%%%%%%%%%%
%%%%%%%%%%%%%%%%%%%%%%%%%%%%%%%%%%%%%%%
\subsection{The identity ${\bf HLog}^{r-2}$ is defined over $\ZZ$}
\label{SS:over-Z}
%%%%%%%%%%%%%%%%%%%%%%%%%%%%%%%%%%%%%%%
%%%%%%%%%%%%%%%%%%%%%%%%%%%%%%%%%%%%%%%
By requiring that all the residues considered are integers, one defines canonical $\ZZ$-structures  $\HH^\ZZ$, $\HH^\ZZ_{\Sigma_\mc}$, $\HH_{\mc}^\ZZ$
on the spaces $\HH$, $\HH_{\Sigma_\mc}$, $\HH_{\mc}$ respectively, which are compatible with respect to pull-backs and inclusions, i.e., one has 
$\phi_\mc^*\HH^\ZZ_{\Sigma_\mc}=\HH^\ZZ_{\mc}$ and $\HH_\mc\subset \HH$ induces an inclusion  $\HH_\mc^\ZZ \subset \HH^\ZZ$
for any $\mc\in \cK$.  Moreover, the residue map ${\rm Res}_L$ of \S\ref{SS:SS-32} admits a canonical lift $%{\rm Res}_L^\ZZ : 
 \HH^\ZZ\longrightarrow \ZZ^\cL$
over $\ZZ$. We leave it to the reader to verify that all the statements in \S\ref{SS:SS-33}--\S\ref{SS:representation-theoretic-interpretation} hold over $\ZZ$. In conclusion,  
${\bf HLog}^{r-2}$ is defined over $\ZZ$, a fact which may be interesting from an arithmetic perspective.

%%%%%%%%%%%%%%%%%%%%%%%%%%%%%%%%%%%%%%%
%%%%%%%%%%%%%%%%%%%%%%%%%%%%%%%%%%%%%%%
\subsection{} 
%%%%%%%%%%%%%%%%%%%%%%%%%%%%%%%%%%%%%%%
%%%%%%%%%%%%%%%%%%%%%%%%%%%%%%%%%%%%%%%
For any $n\geq 2$, it is tempting to consider more generally blow-ups $Y_r=\Bl_r\big(\PP^n\big)$ at $r\geq n+2$ general points and attempt to generalize Theorem \ref{Thm:main} by following the exact same approach as in this section. One defines a symmetric bilinear form $(\cdot , \cdot )$ on $K_{Y_r}^{\perp}\subset\Pic(Y_r)=\ZZ\{H,E_1\ldots, E_r\}$ (where $H$ is the hyperplane class and the $E_i$'s are the classes of the exceptional divisors) by setting
$$\big(H,H\big)=n-1\, , \qquad \big(H,E_i\big)=0\qquad \mbox{ and } \qquad  \big(E_i,E_j\big)=-\delta_{ij}\quad  \mbox{ for } \, i,j=1,\ldots,r\,.$$ 
One can define a Coxeter group $W$ associated to a T-shaped Dynkin-type diagram $T_{2,n+1,r-n-1}$ such that $W$ acts on $\Pic(Y_r)$ in a geometric fashion, in particular preserving the bilinear form $(\cdot , \cdot )$. It is known that $W$ is finite if and only if 
 \begin{equation}
 \label{Eq:nr} 
\frac{1}{n+1}+\frac{1}{r-n-1}>\frac{1}{2}\,.
\end{equation}
 This condition is equivalent to $Y_r$ being a \emph{Mori dream space} \cite{Mukai, CT} and translates to $r\leq n+3$ if $n\geq5$, $r\leq 8$ if $n=4, 2$, and $r\leq 7$ if $n=3$. Such blow-ups are natural generalizations of del Pezzo surfaces. We refer to \cite{Mukai, CT} for more details. 

Assume that  \eqref{Eq:nr}   is satisfied.  One can replace the set of lines $\cL$ by the set of \emph{Weyl divisors}, i.e., divisors in the finite orbit $W\cdot E_1$, and the set $\cK$ of conic classes with the set of \emph{Weyl pencils}, which we define as one-dimensional linear systems $|E+F|$, for $E, F$ Weyl divisors such that 
$(E,F)=1$. Such a linear system $\mc$ induces a rational map $\phi_\mc : Y_r\dashrightarrow \PP^1$ which has 
$ r-n+1$ reducible fibers, with components $E', F'$ Weyl divisors such that $(E',F')=1$ and $E'+F'=E+F$ in $\Pic(Y_r)$. It  is straightforward to check that the group $W$ acts transitively on the sets $\cL$ and $\cK$. All the constructions leading up to Lemma \ref{Lem:Lolo} hold in this more general context, i.e., for each Weyl pencil $\mc\in \cK$ we can construct canonical elements $\tau_\mc\in \wedge^{r-n}\CC^\cL$ (well-defined up to a global sign) such that 
Lemma \ref{Lem:Lolo} and the statements 1.\,and 2.\,at the beginning of \S\ref{SS:zogo} hold. 
However, if $n\geq3$, the identity $\sum_{\mc}\tau_\mc=0$ never holds. One can follow the same approach as in this section to prove an analogue of Lemma \ref{Claim:E-exceptional-(r-2)-tuple}: for any \emph{exceptional $(r-n)$-tuple} $\mathcal E$ there are exactly $n$ Weyl pencils such that each element of $\mathcal E$ appears as a component of a reducible fiber of the associated fibration. The analogue of Lemma \ref{Lem:gno-gno} is that 
the coefficient with which an element $E_1\wedge\ldots\wedge E_{r-n}$ in the exceptional basis appears in $\sum_{\mc\in\cK}\tau_\mc$ is (up to a sign) $(n-2)$, hence, never zero if $n\geq3$.  
In analytic terms, this translates as the fact that, if $AI^{r-n}_\mc$ stands for the hyperlogarithm on $\mathbf P^1$ such that 
$AI^{r-n}_\mc\big( \phi_\mc \big) ={\rm II}_{Y_r}^y( \tau_\mc)$ for any $\mc\in \cK$ (for a previously chosen base point $y$ general in $ Y_r$), then the functional identity $\sum_{\mc\in\cK} AI^{r-n}_\mc\big( \phi_\mc \big)=0$ is not satisfied in the vicinity of $y$ on $Y_r$.

However, the space ${\bf HLog}^{r-n}_{Y_r}$
of tuples $(\alpha_\mc)_{\mc \in \cK}\in \mathbf C^{\cK}$ such that 
$\sum_{\mc\in\cK} \alpha_\mc\, AI^{r-n}_\mc\big( \phi_\mc \big) =0$ is not trivial. 
Similar to the case of del Pezzo surfaces, for every exceptional $r$-tuple 
$\mathscr E$ there exists a small modification $F_{\mathscr E}: Y_r\dashrightarrow  {\rm Bl}_{q_1,\ldots,q_r} (\mathbf P^n)$, where 
$\beta: {\rm Bl}_{q_1,\ldots,q_r} (\mathbf P^n) \rightarrow \mathbf P^n$ is a blow-up of a (possibly distinct) configuration of points 
$q_1,\ldots,q_r$ in general position. Let $J\subset \{1,\ldots,r\}$ be of cardinal $n-2$ and 
let $\pi_J: \mathbf P^n\dashrightarrow\mathbf P^2$ be the  linear projection from the 
$(n-3)$-plane in $\mathbf P^n$ spanned by the $q_j$'s for $j\in J$. Setting 
$q_k'=\pi_J(q_k)$ for $k\not \in J$, one has $r-n+2$ points in general position in $\PP^2$. 
The total space of the blow-up $\beta_J: {\rm Bl}_{\{q_k'\}_{k\notin J}}\big(\mathbf P^2\big)\rightarrow \mathbf P^2$ is a del Pezzo  surface of degree $d=7-r+n$ which 
we will denote by ${\rm dP}_{d,J}$. We let $\Pi_{{\mathscr E},J}: Y_r\dashrightarrow {\rm dP}_{d,J}$ be the induced rational maps (so 
that one has 
$\beta_J\circ \Pi_{{\mathscr E},J}=\pi_J\circ \beta\circ F_{\mathscr E}$ as rational maps). If $\cK_J$ denotes the set of conic classes on 
${\rm dP}_{d,J}$ and $\psi_{\boldsymbol{\kappa}} : {\rm dP}_{d,J}\rightarrow \mathbf P^1$ denotes the associated conic fibration for 
${\boldsymbol{\kappa}}\in \cK_J$, then the compositions $\phi_{\boldsymbol{\kappa}} = \psi_{\boldsymbol{\kappa}}\circ \Pi_{{\mathscr E},J}: Y_r\dashrightarrow \mathbf P^1$ are Weyl pencils (and all Weyl pencils are of this form). 
Consequently one obtains an injection $
\cK_J \subset \cK$. 
If 
  $\sum_{ {\boldsymbol{\kappa}}\in \cK_J} \epsilon_{\boldsymbol{\kappa}} \,
AI^{r-n}_ {\boldsymbol{\kappa}}\big( \psi_ {\boldsymbol{\kappa}} \big) =0$ 
  stands for the 
identity  on ${\rm dP}_{d,J}$ given by Theorem 
\ref{Thm:MMain} (with $\epsilon_{\boldsymbol{\kappa}} \in \{\pm 1\}$ for any ${\boldsymbol{\kappa}}\in \cK_J $),  one gets that the hyperlogarithmic identity $\sum_{ {\boldsymbol{\kappa}}\in \cK_J} \epsilon_{\boldsymbol{\kappa}} \,
AI^{r-n}_ {\boldsymbol{\kappa}}\big( \phi_ {\boldsymbol{\kappa}} \big) =0$ holds true locally at $y$ on $Y_r$. 
This identity corresponds to a non-zero element of ${\bf HLog}^{r-n}_{Y_r}$, which we denote by 
 ${\bf HLog}^{r-n}_{\mathscr E,J}$.   It follows that ${\bf HLog}^{r-n}_{Y_r}$ is not trivial. 
 
It is natural to ask 
whether the span of the set of ${\bf HLog}^{r-n}_{\mathscr E,J}$'s  for all pairs $(\mathscr E,J)$ as above coincides with the whole space ${\bf HLog}^{r-n}_{Y_r}$ or not. 
For the case when $r=n+2$ with $n\geq 2$ arbitrary, this follows from computations in \cite{Pereira}. 
By direct computations, we have verified that it is the case as well for $(n,r)=(3,6)$ and $(n,r)=(4,7)$.  We conjecture that this happens in all cases. If true, this would say that regarding functional identities satisfied by the complete antisymmetric hyperlogarithms $AI^{r-n}_\mc$ on $Y_r$, there is nothing new since  everything come from the 2-dimensional del Pezzo hyperlogarithmic identity ${\bf HLog}^{r-n}$ 
(up to pull-backs under the maps $\Pi_{\mathscr E,J} : Y_r\dashrightarrow {\rm dP}_{d,J}$).

%%%%%%%%%%%%%%%%%%%%%%%%%%%%%%%%%%%%%%%
%%%%%%%%%%%%%%%%%%%%%%%%%%%%%%%%%%%%%%%
\section{\bf The identity ${\bf HLog}^3$ in explicit form}
\label{S:HLog3}
%%%%%%%%%%%%%%%%%%%%%%%%%%%%%%%%%%%%%%%
%%%%%%%%%%%%%%%%%%%%%%%%%%%%%%%%%%%%%%%
%%%%%%%%%%%%%%%%%%%%%%%%%%%%%%%%%%%%%%%
%%%%%%%%%%%%%%%%%%%%%%%%%%%%%%%%%%%%%%%
The identity ${\bf HLog}^2$ is equivalent to Abel's relation
$\boldsymbol{\big(\mathcal Ab\big)}$
 which is written in explicit form.  One can make the other hyperlogarithmic identities ${\bf HLog}^{r-2}$ explicit as well. We illustrate this with the case when $r=5$.\sk

Let $X_5$  stand  for the blow-up of $\mathbf P^2$ at the following five points:   $p_1=[1:0:0]$, $p_2=[0:1:0]$, $p_3=[0:0:1]$,  $p_4=[1:1:1]$ and $p_5=[a : b : 1]$, 
for some parameters $a,b\in \mathbf C$ such that
%%%%%%%%%%%%%%%%%%%%%%%%%%%%%%%%%%%%%%%
\begin{equation*}
%\label{Eq:PG-pi-gamma}
%%%%%%%%%%%%%%%%%%%%%%%%%%%%%%%%%%%%%%%
ab(a-1)(b-1)(a-b)\neq 0\, ,
%%%%%%%%%%%%%%%%%%%%%%%%%%%%%%%%%%%%%%%
\end{equation*}
%%%%%%%%%%%%%%%%%%%%%%%%%%%%%%%%%%%%%%%
a condition that we assume to be satisfied in what follows.  

We make explicit the  weight 3 hyperlogarithmic identity ${\bf H Log}(X_5)$ when expressed in the 
affine coordinates $x,y$ corresponding to the affine embedding $\mathbf C^2\hookrightarrow \mathbf P^2$, $(x,y)\mapsto [x:y:1]$.

Relatively to the coordinates $x,y$, the conic fibrations on $X_5$ correspond  on $\mathbf P^2$ to the following rational functions $U_i$ 
(where $P$ stands for the affine polynomial $P=(1-b )x - (1-a )y - (a - b)$)
: 
$$
\scalebox{1.2}{
\begin{tabular}{lllll}
$U_1= x$ & $ U_2=  \frac1y$ 
& $ U_3=  \frac{y}{x} $ 
& $U_4= 
\frac{x-y}{x-1} $ 
& $  U_5= 
   \frac{b(a-x ) }{a y - b x} $ 
  \vspace{0.25cm}  \\
 $ U_6=    \frac{
 P}{ (x - 1)(y-b)}$ 
   & $ U_7=     \frac{ (x - y)(y-b)}{ y\,P}$ 
    &
$    U_8=  \frac{x\,P}{ (x - y)(x - a)} $
     & 
$ U_9= \,
          \frac{  
      y(x - a)}{ x (y-b)}  $ 
      & 
      $U_{10}=  \frac{
       x(y - 1)}{y (x - 1)} \, .$
\end{tabular}
}
$$

For any $i=1,\ldots,10$, the set of $\lambda\in \PP^1$ 
for which $U_i^{-1}(\lambda)$ is reducible  has the form $\{ 0,1, {\mathfrak r}_i, \infty\}$ where ${\mathfrak r}_i\in \mathbf P^1\setminus \{ 0,1, \infty \}$ is given by
%%%%%%%%%%%%%%%%%%%%%%%%%%%%%%%%%%%%
\begin{align*}
%\label{Eq:WdP4-Ui}
   \nonumber
{\mathfrak r}_1= & \,  a  && {\mathfrak r}_2=  \frac1b
&& {\mathfrak r}_3=  \frac{b}{a} 
&& {\mathfrak r}_4= 
\frac{a-b}{a-1} 
&&  {\mathfrak r}_5= 
   \frac{b(a-1 ) }{a  - b }  \\
 {\mathfrak r}_6=  &\,   \frac{
 b-a}{ b}
   && {\mathfrak r}_7= 
          \frac{ 1}{1-a}
    &&
    {\mathfrak r}_8= 1-b 
     && 
{\mathfrak r}_9= \,
          \frac{  
     1 - a}{ 1-b} 
      && 
      {\mathfrak r}_{10}=  \frac{
       a(b - 1)}{b (a - 1)} \, .
           \nonumber
%%%%%%%%%%%%%%%%%%%%%%%%%%
\end{align*} 
%%%%%%%%%%%%%%%%%%%%%%%%%%%%%%%%%%%%

For a triple $(a,b,c)$ of pairwise distinct points on $\mathbf C$ and a given base point $\xi \in \mathbf C\setminus \{a,b,c\}$, we consider the  weight 3 hyperlogarithm  $L_{a,b,c}^{\xi}$ 
defined  by
$$ 
L_{a,b,c}^{\xi}(z)= \int_{\xi}^{z}   \Bigg(\int_{\xi}^{u_3} \bigg(\int_{\xi}^{u_2} \frac{du_1}{u_1-c}\bigg) \frac{du_2}{u_2-b}\Bigg)  \frac{du_3}{u_3-a} $$
for any $z$ sufficiently close to $\xi$, and we denote by $AI_{a,b,c}^{\xi}$ its antisymmetrization:
\begin{equation}
\label{Eq:AI-a,b,c}
AI_{a,b,c}^{\xi}=\frac{1}{6} \, \bigg(\, L_{a,b,c}^{\xi}-L_{a,c,b}^{\xi}-L_{b,a,c}^{\xi}+L_{b,c,a}^{\xi}+L_{c,a,b}^{\xi}-L_{c,b,a}^{\xi}
\bigg) 
\, .
\end{equation}

We now fix a base point $\zeta\in \mathbf C^2$ image of a point in $X_5$ which does not belong to any line. For $i=1,\ldots,10$, we set  $\zeta_i=U_i(\zeta)\in \mathbf C\setminus \{0,1,r_i\}$  and $$ AI_i^3=AI^{\zeta_i}_{0,1,r_i}\, . $$ 
  Then one can verify that 
${\bf H Log}(X_5)$  has the following explicit form
 \begin{equation}
\label{Eq:Eq-explicit-d=4}
\sum_{i=1}^{10} AI_i^3\big(U_i\big)=0\, , 
\end{equation}
a functional identity which is  satisfied on any sufficiently small neighborhood of $\zeta$.

%%%%%%%%%%%%%%%%%%%%%%%%%%%%%%%%%%%%%%%%
%%%%%%%%%%%%%%%%%%%%%%%%%%%%%%%%%%%%%%%

\bigskip
%\bigskip

%\newpage 
%${}^{}$

\vfill 
{\small  ${}^{}$ \hspace{-0.6cm} {\bf Ana-Maria Castravet, Luc Pirio}\\
Laboratoire de Math\'ematiques de Versailles\\
 Universit\'e Paris-Saclay, UVSQ \& CNRS  (UMR 8100)\\
 45 Avenue des \'Etats-Unis, 78000 Versailles, France\\
 E-mails: \href{mailto:ana-maria.castravet@uvsq.fr}{\tt ana-maria.castravet@uvsq.fr}, \href{mailto:luc.pirio@uvsq.fr}{\tt luc.pirio@uvsq.fr}

%Universit\'e Paris-Saclay, UVSQ, Laboratoire de Math\'ematiques de Versailles, 
%45 Avenue des \'Etats Unis, 78035 Versailles, France }

\end{document}